\numberwithin{equation}{section}
\theoremstyle{plain}
\newtheorem{theorem}{Theorem}[section]
\newtheorem{lemma}[theorem]{Lemma}
\newtheorem{proposition}[theorem]{Proposition}
\newtheorem{fact}[theorem]{Fact}
\theoremstyle{definition}
\newtheorem{definition}[theorem]{Definition}
\newtheorem*{acknowledgements}{Acknowledgements}
\theoremstyle{remark}
\newtheorem{remark}[theorem]{Remark}
\theoremstyle{plain}
\newtheorem{rep@theorem}{Theorem}
\newtheorem{rep@lemma}{Lemma}
\newtheorem{rep@proposition}{Proposition}
\newtheorem{rep@corollary}{Corollary}
\newenvironment{reptheorem}[1]{%
  \begingroup
  \def\therep@theorem{\ref{#1}}%
  \begin{rep@theorem}}
  {\end{rep@theorem}\endgroup}
\newcommand{\N}{\mathbf{N}}
\newcommand{\R}{\mathbf{R}}
\DeclarePairedDelimiter{\abs}{\lvert}{\rvert}
\DeclarePairedDelimiter{\norm}{\lVert}{\rVert}
\title[Completeness conditions for spacetimes with low-regularity metrics]
{Completeness conditions for spacetimes with low-regularity metrics}
\author{Keita Takahashi}
\address{%
Department of Mathematics, Institute of Science Tokyo, O-okayama,
Meguro, Tokyo 152-8551, Japan
}
\date{\today}
\email{takahashi.k.f832@m.isct.ac.jp}
\subjclass[2020]{53C50, 53C23, 53B30, 51K10}
\keywords{Lorentzian geometry, Hopf--Rinow Theorem, Lorentzian length spaces}
\begin{document}
\begin{abstract}
    We extend Beem's three completeness notions --- finite compactness, timelike Cauchy completeness, and Condition A
    --- originally defined for spacetimes, to Lorentzian length spaces and study their relationships. 
    We prove that finite compactness implies timelike Cauchy completeness and that 
    timelike Cauchy completeness implies Condition A for globally hyperbolic Lorentzian length spaces. 
    Furthermore, for globally hyperbolic $C^{1}$-spacetimes, we establish the equivalence of the 
    three conditions assuming the causally non-branching and non-intertwining conditions, 
    which in fact imply the continuity of the causal exponential map. 
    These results can be regarded as a Hopf-Rinow type theorem for low-regularity Lorentzian geometry. 
    The appendix presents examples of $C^{1}$-spacetimes --- where geodesic uniqueness may fail --- 
    in which causal geodesics nevertheless behave well, illustrating the scope of our results.
\end{abstract}

\maketitle


\section{Introduction}

In Riemannian geometry, a fundamental result is that
every connected Riemannian manifold is naturally endowed with a metric space structure.
The Hopf--Rinow theorem asserts that several completeness conditions
are equivalent in this setting.

\begin{fact}[Hopf--Rinow, cf.~{\cite[Chapter 7, Theorem 2.8]{dC92}}]
Let $(M, g)$ be a connected Riemannian manifold.
The following conditions are equivalent:
\begin{enumerate}
\item $M$ is boundedly compact, i.e., every closed metric ball in $M$ is compact;
\item $M$ is metrically complete, i.e., every Cauchy sequence converges in $M$;
\item $M$ is geodesically complete, i.e., for all $p \in M$, any geodesic 
$\gamma \colon I \to M$ starting at $p$ is defined for all $t \in \R$.
\end{enumerate}
\end{fact}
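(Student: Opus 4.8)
The plan is to establish the equivalences via the cyclic chain $(1)\Rightarrow(2)\Rightarrow(3)\Rightarrow(1)$, and along the way to extract the companion statement that, under any of the three conditions, every pair of points of $M$ is joined by a minimizing geodesic; the step $(3)\Rightarrow(1)$ is where essentially all of the difficulty sits.

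For $(1)\Rightarrow(2)$ I would argue that a $d_g$-Cauchy sequence is bounded, hence contained in some closed metric ball, which is compact by hypothesis; extracting a convergent subsequence and using the Cauchy property then forces convergence of the full sequence. For $(2)\Rightarrow(3)$, suppose a unit-speed geodesic $\gamma\colon[0,a)\to M$ with $a<\infty$ cannot be extended past $a$. Choosing $t_n\uparrow a$ and using $d_g(\gamma(t_n),\gamma(t_m))\le\abs{t_n-t_m}$, the points $\gamma(t_n)$ form a Cauchy sequence, so they converge to some $p\in M$ by metric completeness. Continuous dependence of solutions of the geodesic equation on their initial data provides a neighbourhood $U$ of $p$ and an $\varepsilon>0$ such that every unit-speed geodesic starting in $U$ is defined at least on $[0,\varepsilon)$; applying this at $\gamma(t_n)$ for $n$ large enough that $\gamma(t_n)\in U$ and $a-t_n<\varepsilon$ extends $\gamma$ beyond $a$ — a contradiction — so every geodesic is complete.

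The heart of the argument is $(3)\Rightarrow(1)$, and here the first thing I would prove is the lemma: if all geodesics emanating from a fixed $p\in M$ are complete (equivalently $\exp_p$ is defined on all of $T_pM$), then every $q\in M$ admits a minimizing geodesic from $p$. To see this, set $r=d_g(p,q)$, pick $\delta\in(0,r)$ so that $\overline{B(p,\delta)}$ is a normal ball, and, using compactness of the geodesic sphere $S_\delta(p)$, choose a unit $v\in T_pM$ minimizing $x\mapsto d_g(x,q)$ over that sphere; put $\gamma(t)=\exp_p(tv)$. The claim is that $\gamma(r)=q$, which follows once one shows that
\[
A=\{\,t\in[\delta,r]:d_g(\gamma(t),q)=r-t\,\}
\]
equals $[\delta,r]$; it contains $\delta$ by the triangle inequality together with the choice of $v$, it is closed by continuity, and the crucial point is ruling out $t_0:=\sup A<r$. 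For that one repeats the normal-ball construction at $\gamma(t_0)$, uses the triangle inequality to force the distance-minimizing point on a small geodesic sphere about $\gamma(t_0)$ to coincide with $\gamma(t_0+\delta')$, and invokes the fact that a curve realizing the distance between its endpoints is an unbroken geodesic, obtaining $t_0+\delta'\in A$ and a contradiction. Granting the lemma, any $q$ with $d_g(p,q)\le r$ is of the form $\exp_p(d_g(p,q)v_q)$ with $\norm{d_g(p,q)v_q}\le r$, so $\overline{B(p,r)}\subseteq\exp_p(\overline{B_r(0)})$; the latter is compact as a continuous image of a compact set, and $\overline{B(p,r)}$, being closed inside it, is compact. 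This yields $(1)$ and closes the cycle.

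The hard part will be this minimizing-geodesic lemma: keeping the iterated normal-ball construction under control — in particular arguing that the concatenated curve develops no corner, via the characterization of distance realizers as smooth geodesics — is the delicate point, everything else being essentially soft topology and ODE theory.
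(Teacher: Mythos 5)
The paper does not prove this statement: it is quoted as a classical Fact with a citation to do Carmo, so there is no in-paper proof to compare against. Your argument is correct and is essentially the standard textbook proof from that reference --- the cycle $(1)\Rightarrow(2)\Rightarrow(3)\Rightarrow(1)$ with the minimizing-geodesic lemma (via the connectedness argument on the set $A$ and the ``distance realizers are unbroken geodesics'' step) carrying the weight of $(3)\Rightarrow(1)$.
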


This theorem can be extended to locally compact length spaces
with slight modifications.

\begin{fact}[Hopf, Rinow, Cohn-Vossen, cf.~{\cite[Theorem 2.5.28]{BBI01}}]
Let $X$ be a locally compact length space.
The following conditions are equivalent:
\begin{enumerate}
\item $X$ is boundedly compact;
\item $X$ is metrically complete;
\item Every geodesic $\gamma \colon [0,a) \to X$ can be extended to a continuous path \break
$\tilde{\gamma} \colon [0,a] \to X$.
\end{enumerate}
\end{fact}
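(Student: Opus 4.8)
The strategy is to prove the cyclic chain $(1)\Rightarrow(2)\Rightarrow(3)\Rightarrow(1)$. The first two links are soft and need only metric completeness: for $(1)\Rightarrow(2)$, a Cauchy sequence is bounded, hence eventually contained in one closed ball $\overline{B}(x_0,R)$, which is compact by hypothesis, so it has a convergent subsequence and, being Cauchy, converges. For $(2)\Rightarrow(3)$, a geodesic $\gamma\colon[0,a)\to X$ parametrized by arclength satisfies $d(\gamma(s),\gamma(t))\le L(\gamma|_{[s,t]})=|s-t|$, so it is $1$-Lipschitz; hence for any $t_n\uparrow a$ the sequence $(\gamma(t_n))$ is Cauchy and converges to some $p\in X$ by (2), and the same bound shows $\gamma(t)\to p$ as $t\to a^{-}$, so $\tilde\gamma(a):=p$ gives a continuous extension.

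The real content is $(3)\Rightarrow(1)$. Fix $p\in X$ and put $\rho(p)=\sup\{r\ge 0:\overline{B}(p,r)\text{ is compact}\}$. Local compactness forces $\rho(p)>0$ for every $p$, and the inclusion $\overline{B}(q',r)\subseteq\overline{B}(q,r+d(q,q'))$ (together with ``closed subset of compact is compact'') shows that $\{r:\overline{B}(p,r)\text{ compact}\}$ is downward closed and that $\rho$ is either identically $+\infty$ or finite everywhere and then $1$-Lipschitz. Assume for contradiction $\rho:=\rho(p)<\infty$. I first claim $\overline{B}(p,\rho)$ is compact. Given $(x_n)\subseteq\overline{B}(p,\rho)$ we may assume $d(p,x_n)\to\rho$; using the length metric, choose arclength curves $c_n\colon[0,\ell_n]\to X$ from $p$ to $x_n$ with $\ell_n\le d(p,x_n)+1/n$. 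On each interval $[0,\rho-1/k]$ the curves $c_n$ (for $n$ large) are equi-Lipschitz and take values in the compact ball $\overline{B}(p,\rho-1/k)$, so Arzel\`a--Ascoli plus a diagonal argument gives a subsequence $c_{n_j}$ converging uniformly on compacta to a $1$-Lipschitz $c\colon[0,\rho)\to X$; one checks $d(p,c(t))=t$ for all $t$, so $c$ is a minimizing geodesic and, by hypothesis (3), extends continuously to $\tilde c\colon[0,\rho]\to X$. Splitting $d(x_{n_j},\tilde c(\rho))\le d(x_{n_j},c_{n_j}(\rho-\varepsilon))+d(c_{n_j}(\rho-\varepsilon),\tilde c(\rho-\varepsilon))+d(\tilde c(\rho-\varepsilon),\tilde c(\rho))$ and letting $j\to\infty$ and then $\varepsilon\to 0$ yields $x_{n_j}\to\tilde c(\rho)\in\overline{B}(p,\rho)$, so $\overline{B}(p,\rho)$ is compact.

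Now push past $\rho$. Cover the compact set $\overline{B}(p,\rho)$ by finitely many balls $B(q_1,\delta_1/3),\dots,B(q_N,\delta_N/3)$ with each $\overline{B}(q_i,\delta_i)$ compact, and set $\delta=\min_i\delta_i/3>0$. If $d(p,x)\le\rho+\delta$ and $d(p,x)>\rho$, follow a near-minimizing arclength curve from $p$ to $x$ up to the first point $y$ with $d(p,y)=\rho$; then $d(y,x)\le\delta+o(1)$ and $y\in B(q_i,\delta_i/3)$ for some $i$, whence $x\in\overline{B}(q_i,\delta_i)$ (the case $d(p,x)\le\rho$ being trivial). Thus $\overline{B}(p,\rho+\delta)\subseteq\bigcup_i\overline{B}(q_i,\delta_i)$ is compact, contradicting the definition of $\rho$. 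Hence $\rho\equiv+\infty$, so every closed ball of $X$ is compact, i.e.\ (1) holds.

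The main obstacle is the compactness of the critical ball $\overline{B}(p,\rho)$: one must manufacture, out of an \emph{arbitrary} sequence in $\overline{B}(p,\rho)$, a single limiting minimizing ray $c\colon[0,\rho)\to X$ on which hypothesis (3) can be invoked, and then carefully control the double limit in $j$ and $\varepsilon$ needed to push the convergence of $c_{n_j}$ out to the a priori inaccessible radius $\rho$. This step is exactly where local compactness (to run Arzel\`a--Ascoli on balls of radius strictly less than $\rho$), the length metric (to produce the near-minimizers and to locate a point at distance exactly $\rho$), and the extendability hypothesis (3) must all be used together; by comparison, the final finite-cover argument advancing $\rho$ strictly upward is routine.
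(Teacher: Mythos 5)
The paper does not prove this statement: it is quoted as a background Fact with a citation to Burago--Burago--Ivanov, Theorem 2.5.28, so there is no in-paper proof to compare against. Your argument is correct and is essentially the standard proof from that reference --- the two soft implications, followed by the $\rho(p)=\sup\{r:\overline{B}(p,r)\text{ compact}\}$ argument in which the critical ball is shown compact via near-minimizers, Arzel\`a--Ascoli on sub-balls, and the extendability hypothesis, and is then enlarged by a finite cover to reach a contradiction.
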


In contrast with the Riemannian case, 
the completeness conditions for Lorentzian manifolds are considerably more involved. 
In particular, for Lorentzian manifolds with low-regularity metrics, 
much less is known, and many fundamental questions remain open.

Busemann~\cite{B67} first introduced finite compactness and timelike Cauchy completeness 
for timelike spaces and proved that finite compactness implies timelike Cauchy completeness.
These conditions are Lorentzian analogues of bounded compactness 
and metric completeness for Riemannian manifolds, respectively. 

Although the specific terminology of timelike spaces introduced by Busemann 
may not appear frequently in standard Lorentzian texts, 
his approach has found interest in modern frameworks.  
Indeed, the structures introduced by Busemann are fundamental to 
currently active subjects of research, particularly concerning the study of 
convex subsets and their duality in Lorentzian geometries (cf.~\cite{FS19}).

Building upon Busemann's work, 
Beem~\cite{B76} proved a following Lorentzian version of the Hopf--Rinow theorem for 
$C^2$-spacetimes. Here, a $C^2$-spacetime means a smooth manifold 
endowed with a time-orientable Lorentzian metric of class $C^2$.    

\begin{fact}[Beem~{\cite[Theorem 5]{B76}}]\label{Beem}
Let $(M, g)$ be a globally hyperbolic $C^2$-spacetime. 
The following conditions are equivalent:
\begin{enumerate}
\item $M$ is finitely compact (cf.~Definition \ref{def:finitely});
\item $M$ is timelike Cauchy complete (cf.~Definition \ref{def:timelike});
\item $M$ satisfies Condition A (cf.~Definition \ref{def:conditionA}).
\end{enumerate}
\end{fact}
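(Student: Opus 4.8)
The plan is to establish the equivalence by proving the cyclic chain $(1)\Rightarrow(2)\Rightarrow(3)\Rightarrow(1)$, in parallel with the Riemannian Hopf--Rinow argument but with the time separation $\tau$ playing the role of the Riemannian distance and maximal causal geodesics the role of minimizing geodesics. Throughout I would use the standard features of a globally hyperbolic $C^{2}$-spacetime: $\tau$ is finite-valued and continuous, causal diamonds $J^{+}(p)\cap J^{-}(q)$ are compact, the Avez--Seifert theorem supplies a maximal (length-realizing) causal geodesic between any causally related pair, the limit curve theorems apply, and the geodesic flow is smooth, so the causal exponential map $\exp_{p}$ is available with its usual continuity and domain properties. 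Only the last implication genuinely uses the smoothness of $\exp_{p}$; the first two go through in much greater generality.

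For $(1)\Rightarrow(2)$: a timelike Cauchy sequence (Definition~\ref{def:timelike}) is, after relabeling, a future-directed causal chain that stays within bounded time separation of its initial point, hence is finitely bounded in the sense of Definition~\ref{def:finitely}; finite compactness then gives a subsequence converging to some $q\in M$, and the monotone causal structure together with continuity of $\tau$ and strong causality (to reconcile $\tau$-convergence with convergence in the manifold topology) upgrades this to convergence of the whole sequence. This is Busemann's implication and I expect no real obstacle.

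For $(2)\Rightarrow(3)$: assume $M$ is timelike Cauchy complete and let $\gamma\colon[0,b)\to M$ be a maximal causal geodesic segment that is inextendible in the way Condition~A (Definition~\ref{def:conditionA}) forbids; I would derive a contradiction. Choosing $t_{n}\uparrow b$, the points $\gamma(t_{n})$ form a future-directed causal chain and, since $\gamma$ is maximal, $\tau(\gamma(t_{n}),\gamma(t_{m}))$ equals the Lorentzian length of $\gamma|_{[t_{n},t_{m}]}$, which tends to $0$; hence $\{\gamma(t_{n})\}$ is timelike Cauchy and converges to some $q\in M$. A limit curve argument inside the compact diamond $J^{+}(\gamma(0))\cap J^{-}(q')$ with $q'\gg q$, using upper semicontinuity of Lorentzian length and continuity of the geodesic flow, then extends $\gamma$ as a maximal causal geodesic past $b$, contradicting inextendibility. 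Global hyperbolicity is essential here --- for Avez--Seifert and the limit curve theorems, and for the finiteness and continuity of $\tau$.

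For $(3)\Rightarrow(1)$ --- the crux, and the one step where $C^{2}$-regularity is genuinely needed --- assume Condition~A and let $\{q_{n}\}$ be finitely bounded in the sense of Definition~\ref{def:finitely}, say with a fixed $p$ for which $\tau(p,q_{n})$ (and the dual past separations) stay bounded. By Avez--Seifert choose maximal causal geodesics $\sigma_{n}$ from $p$ to $q_{n}$ and write $\sigma_{n}(s)=\exp_{p}(s\,v_{n})$ for normalized causal initial vectors $v_{n}$; after passing to a subsequence, $v_{n}\to v$ in the compact set of such directions, while the affine lengths $a_{n}$ at which $\sigma_{n}$ reaches $q_{n}$ are controlled by $\tau(p,q_{n})$ and so, along a further subsequence, $a_{n}\to a$. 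The decisive use of Condition~A is that it forces $\exp_{p}$ to be defined and continuous at $a\,v$ --- morally, Condition~A says a maximal causal geodesic cannot terminate before the time separation allows --- whence $q_{n}=\exp_{p}(a_{n}v_{n})\to\exp_{p}(a\,v)\in M$, the desired subsequential limit. The main obstacle is precisely this last point: one must show that the domain of $\exp_{p}$ reaches the limiting initial data, which is exactly where low regularity is dangerous and where, in the $C^{1}$ setting, the continuity of the causal exponential map furnished by the causally non-branching and non-intertwining hypotheses would be invoked in its place; strong causality is again needed to pass from convergence of initial data to convergence of endpoints in the manifold topology.
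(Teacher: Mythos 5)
A preliminary remark: the paper does not actually prove this statement --- it is quoted from Beem --- but it proves the three implications in generalized form (Theorem~\ref{timelikeCauchycomplete} and Theorem~\ref{conditionA} for Lorentzian length spaces, Theorem~\ref{th:finite-compactness} for $C^1$-spacetimes), and your cyclic decomposition $(1)\Rightarrow(2)\Rightarrow(3)\Rightarrow(1)$ matches that architecture. Your $(1)\Rightarrow(2)$ is fine, but the other two steps each have a genuine gap. In $(2)\Rightarrow(3)$ you treat the points $\gamma(t_n)$ directly as a timelike Cauchy sequence. Condition~A quantifies over \emph{causal} geodesics, so $\gamma$ may be null; then $\gamma(t_n)\le\gamma(t_{n+1})$ but $\gamma(t_n)\not\ll\gamma(t_{n+1})$, and Definition~\ref{def:timelike} simply does not apply to this sequence (also, along a maximal null segment $\tau(\gamma(t_n),\gamma(t_m))=0$ trivially, so nothing is being tested). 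The paper's proof of Theorem~\ref{conditionA} circumvents this by constructing auxiliary points $z_n$ with $z_n\ll z_{n+1}$, chosen on maximal geodesics from $x$ to $\gamma(t_n)$ and within distance $1/n$ of $\gamma(t_n)$; the Cauchy estimate then comes from the boundedness of $\tau(x,\gamma(t))$ together with the reverse triangle inequality --- not, as you assert, from maximality of $\gamma$.

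In $(3)\Rightarrow(1)$ you base the maximal geodesics at $p$ and claim the affine parameters $a_n$ are ``controlled by $\tau(p,q_n)$.'' That is precisely the hard point, and as stated it fails: for directions degenerating to a null direction the affine parameter is not controlled by the Lorentzian length, and along a maximal null geodesic emanating from $p$ itself $T(p,\cdot)$ vanishes identically, so neither Condition~A nor any monotonicity of $T(p,\cdot)$ can pin down the parameter there. This is exactly why Definition~\ref{def:finitely} carries the intermediate point $q$ with $p\ll q\le x_n$ and why the paper (following Beem) bases the exponential map at $q$: Lemma~\ref{lem:increasing} then gives strict monotonicity of $s\mapsto T(p,\exp_q(su))$ even along null directions, Condition~A guarantees this function reaches the value $b=\lim T(p,z_n)$ at a finite parameter $s_0$, and a uniform-monotonicity argument on a compact neighborhood of directions forces $s_n\to s_0$. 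Your sketch omits the monotonicity lemma entirely and places the whole burden on ``continuity and domain of $\exp_p$,'' which by itself does not yield boundedness or convergence of the $a_n$.
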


A recent result by Burtscher and García-Heveling~\cite{BG24} provides 
a new Lorentzian version of the Hopf--Rinow theorem, 
characterizing global hyperbolicity via the null distance introduced by 
Sormani and Vega~\cite{SV16}, though its current formulation 
appears to require at least $C^{2,1}$ regularity. 

Therefore, the question of completeness remains insufficiently 
understood for spacetimes with Lorentzian metrics of low-regularity. 
Extending Hopf--Rinow type theorems to such low-regularity settings 
is of intrinsic mathematical interest. For instance, Buro~\cite[Theorem 5.22]{B23} established 
a Hopf--Rinow type theorem for weak Finsler metrics of low regularity 
by introducing the notion of special geodesics.

In recent years, there has been significant progress on low-regularity 
Lorentzian geometry via a synthetic approach. 
A central framework in this direction is provided by Lorentzian length spaces, 
introduced by Kunzinger and S\"amann~\cite{KS18}. 
These spaces admit a causal structure and a time separation function 
without assuming a manifold structure or even a Lorentzian metric. 
They provide a general framework extending classical Lorentzian geometry, 
allowing one to study causal and metric properties 
in low-regularity settings. 
For example, Beran, Ohanyan, Rott, and Solis~\cite{BORS23} establish 
a Lorentzian splitting theorem in this setting.

It is thus natural to ask how far Beem's Hopf--Rinow-type implications extend to 
spacetimes with low-regularity Lorentzian metrics and, more broadly, to Lorentzian length spaces. 
Furthermore, within this synthetic framework, we may ask to what extent one can verify which classical completeness 
implications persist---thereby assessing how well Lorentzian length spaces provide a suitable framework.

In this article, we extend Beem's completeness conditions~\cite{B76} 
to Lorentzian length spaces and show that two of Beem's implications
continue to hold in this general setting. 

\begin{reptheorem}{thm:LLS}
Let $(X, \ll, \leq, d, \tau)$ be a globally hyperbolic Lorentzian length space.
\begin{enumerate}
\item If $(X, \ll, \leq, d, \tau)$ is finitely compact, then $(X, \ll, \leq, d, \tau)$ is timelike Cauchy complete. 
\item If $(X, \ll, \leq, d, \tau)$ is timelike Cauchy complete, then $(X, \ll, \leq, d, \tau)$ satisfies Condition A.
\end{enumerate}
\end{reptheorem}

We also show that the remaining implication holds for $C^1$-spacetimes 
under appropriate geodesic assumptions. 
Combined with Theorem~\ref{thm:LLS}, this yields the following result.

\begin{reptheorem}{thm:main}
Let $(M, g)$ be a connected, globally hyperbolic, causally non-branching, 
and non-intertwining $C^1$-spacetime. 
Then the following three conditions are equivalent:
\begin{enumerate}
    \item $M$ is finitely compact;
    \item $M$ is timelike Cauchy complete;
    \item $M$ satisfies Condition A.
\end{enumerate}
\end{reptheorem}

The article is organized as follows.  
In Section 2, we review the notion of Lorentzian length spaces
introduced by Kunzinger and S\"{a}mann~\cite{KS18} and collect some results 
to be used in the proof of the main theorems.  
In Section 3, we extend the three completeness conditions to Lorentzian length spaces 
and establish Theorem \ref{thm:LLS}. 
In Section~4, we study geodesics in $C^1$-spacetimes, 
analyze the Lorentzian distance along causal geodesics, and establish Theorem \ref{thm:main}.
In Appendix A, we present examples of spacetimes with Lorentzian metrics of regularity below $C^{1,1}$ 
for which geodesic uniqueness still holds, even though classical uniqueness results 
for geodesics generally fail in this regularity class. 
In particular, we exhibit $C^1$-spacetimes that are not $C^{1,1}$ and that satisfy 
the assumptions of Theorem \ref{thm:main}. These spacetimes lie outside the scope of 
Fact~\ref{Beem} but are covered by Theorem \ref{thm:main}.

\begin{acknowledgements}
The author would like to thank Sumio Yamada and Kotaro Yamada for helpful discussions 
and valuable advice. The author would also like to thank the two anonymous reviewers for their careful reading and constructive comments. 
In particular, the author is grateful for their suggestion that helped to weaken the assumptions of the main theorem. 
This work was supported by JST SPRING, Grant Number JPMJSP2180.
\end{acknowledgements}

\section{Preliminaries}

In this section, we recall some notions and results on Lorentzian (pre-)length spaces,
which will be used in lemmas and propositions leading to the main theorems.
Throughout the paper, we denote by $\N$ 
the set of natural numbers starting from $1$. 

\begin{definition}[{\cite[Definition 2.1]{KS18}}]
A \emph{causal space} $(X,\ll,\leq)$ is a set $X$ together 
with two binary relations $\ll$ and $\leq$
where both are transitive, $\leq$ is reflexive, and all $x,y \in X$ with $x \ll y$ satisfy $x \leq y.$
We write $x < y$ if $x \leq y$ and $x \neq y$. 
\end{definition}

The \emph{chronological future} $I^{+}(x)$, \emph{chronological past} $I^{-}(x)$,
\emph{causal future} $J^{+}(x)$, and \emph{causal past} $J^{-}(x)$ of $x \in X$ are defined by
\begin{align}
I^{+}(x) &\coloneqq \{y \in X ; x \ll y\},\ I^{-}(x) \coloneqq \{y \in X ; y \ll x\},\\
J^{+}(x) &\coloneqq \{y \in X ; x \leq y\},\ J^{-}(x) \coloneqq \{y \in X ; y \leq x\}.
\end{align}

\begin{definition}[{\cite[Definition 2.8]{KS18}}]\label{LLS}
A \emph{Lorentzian pre-length space} $(X, \ll, \leq, \break d, \tau)$ is a causal space $(X, \ll, \leq)$ together with
a distance $d$ on $X$ and a map $\tau \colon X \times X \to [0, \infty]$ which satisfies the following properties:
\begin{enumerate}[(i)]
\item $\tau$ is lower semicontinuous (with respect to the topology induced by $d$),
\item $\tau(x, z) \geq \tau(x, y) + \tau(y, z)$ for all $x, y, z \in X$ with $x \leq y \leq z,$
\item $\tau(x, y) > 0$ if and only if $x \ll y,$
\item $\tau(x, y) = 0$ if $x \not\leq y.$
\end{enumerate}
$\tau$ is called the \emph{time separation function}.
\end{definition}

\begin{remark}
In the following sections,
we always use the metric topology induced by $d$ 
for all Lorentzian (pre-)length spaces.
\end{remark}

\begin{fact}[{\cite[Lemma 2.10]{KS18}}]\label{pushup}
Let $(X, \ll, \leq, d, \tau)$ be a Lorentzian pre-length space and let
$x,y,z \in X$ with $x \leq y \ll z$ or $x \ll y \leq z.$ Then $x \ll z.$
\end{fact}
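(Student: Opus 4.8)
The plan is to prove Fact~\ref{pushup}, the ``push-up'' property. The statement has two symmetric cases: $x \leq y \ll z$ and $x \ll y \leq z$. I would handle the first case in detail and note that the second is entirely analogous, with the roles of past and future interchanged (and the argument using the triangle-type inequality~(ii) in the mirrored order).

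\medskip\noindent\textbf{Setup.} Suppose $x \leq y \ll z$. The goal is to show $x \ll z$, and by property~(iii) of Definition~\ref{LLS} this is equivalent to showing $\tau(x,z) > 0$. First I would record that $x \leq y \ll z$ implies $x \leq y \leq z$ (since $\ll$ entails $\leq$ in a causal space), so that inequality~(ii) of Definition~\ref{LLS} is applicable to the triple $(x,y,z)$. Hence
\begin{equation}
\tau(x,z) \geq \tau(x,y) + \tau(y,z).
\end{equation}
Now $y \ll z$ gives $\tau(y,z) > 0$ by property~(iii), while $\tau(x,y) \geq 0$ since $\tau$ takes values in $[0,\infty]$. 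Therefore $\tau(x,z) \geq \tau(x,y) + \tau(y,z) > 0$, and applying property~(iii) once more yields $x \ll z$, as desired.

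\medskip\noindent\textbf{The symmetric case.} For $x \ll y \leq z$, the same reasoning applies: $x \ll y$ gives $x \leq y \leq z$, so~(ii) yields $\tau(x,z) \geq \tau(x,y) + \tau(y,z)$; here $\tau(x,y) > 0$ because $x \ll y$, and $\tau(y,z) \geq 0$, so again $\tau(x,z) > 0$ and $x \ll z$.

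\medskip This is a short and direct argument, so there is no real obstacle; the only point requiring a moment's care is making explicit the step $x \ll y \Rightarrow x \leq y$ so that the reverse triangle inequality~(ii) can legitimately be invoked on the full triple. Everything else is an immediate combination of the additivity-type inequality and the characterization of $\ll$ via positivity of $\tau$.
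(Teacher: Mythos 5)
Your argument is correct and is exactly the standard push-up proof given in the cited reference \cite[Lemma 2.10]{KS18} (the paper itself states this as a Fact without reproducing the proof): apply the reverse triangle inequality (ii) to $x\leq y\leq z$, use positivity of $\tau$ on the chronologically related pair via (iii), and conclude $\tau(x,z)>0$. The one step you rightly flag --- that $\ll$ entails $\leq$ so that (ii) is applicable --- is indeed the only point needing care, and you handle it properly.
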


\begin{definition}[{\cite[Definition 2.18]{KS18}}]
Let $(X, \ll, \leq, d, \tau)$ be a Lorentzian pre-length space and $I \subset \R$ an interval.
A \emph{future-directed causal} (resp.\ \emph{timelike}) curve is a non-constant, 
locally Lipschitz continuous map
$\gamma \colon I \to X$ satisfying $\gamma(t_1) \leq \gamma(t_2) \ (\text{resp.}\ 
\gamma(t_1) \ll \gamma(t_2))$
for all $t_1, t_2 \in I$ with $t_1 < t_2$. 
A \emph{past-directed causal} (resp.\ \emph{timelike}) curve is a non-constant, 
locally Lipschitz continuous map
$\gamma \colon I \to X$ satisfying $\gamma(t_2) \leq \gamma(t_1) \ (\text{resp.}\ 
\gamma(t_2) \ll \gamma(t_1))$
for all $t_1, t_2 \in I$ with $t_1 < t_2$. 
\end{definition}

We introduce the length of causal curves by using the time separation function.

\begin{definition}[{\cite[Definition 2.24]{KS18}}]
Let $(X, \ll, \leq, d, \tau)$ be a Lorentzian pre-length space and $\gamma \colon [a, b] \to X$
a future-directed causal curve. The \emph{$\tau$-length} of $\gamma$ is defined by
\begin{equation}
L_{\tau}(\gamma) \coloneqq \inf \left\{\sum_{i = 0}^{n-1} \tau(\gamma(t_i), \gamma(t_{i+1})) ; n \in \N, a = t_0 < t_1 < \dots < t_n = b \right\}.
\end{equation}
If $\gamma$ is a past-directed causal curve, the $\tau$-length of $\gamma$ is defined by
\begin{equation}
L_{\tau}(\gamma) \coloneqq \inf \left\{\sum_{i = 0}^{n-1} \tau(\gamma(t_{i+1}), \gamma(t_{i})) ; n \in \N, a = t_0 < t_1 < \dots < t_n = b \right\}.
\end{equation}
\end{definition}

\begin{definition}[{\cite[Definition 2.33]{KS18}}]\label{def:maximal}
Let $(X, \ll, \leq, d, \tau)$ be a Lorentzian pre-length space.
A future-directed causal curve $\gamma \colon [a, b] \to X$ is called \emph{maximal} if
$L_\tau(\gamma) = \tau(\gamma(a), \gamma(b))$. 
\end{definition}

To avoid some technical difficulties, we impose the following 
three conditions, which are used in the definition of Lorentzian length spaces. 
We note that the notion of local causal closedness defined in \cite{KS18}, 
which is included in the definition of Lorentzian length spaces, 
is not appropriate for smooth spacetimes that are not strongly causal (cf.~\cite[Example 2.20]{ACS20}), 
whereas weak local causal closedness defined in \cite[Definition 2.19]{ACS20} is suited. 
However, since we assume global hyperbolicity for Lorentzian length spaces
throughout this article, the two notions are equivalent (cf.~\cite[Proposition 2.21]{ACS20}); 
thus, it does not matter which definition we adopt.

\begin{definition}[{\cite[Definition 3.1]{KS18}}]
A Lorentzian pre-length space $(X, \ll, \leq, \break d, \tau)$ is called \emph{causally path connected} if
for all $x, y \in X$ with $x \ll y$, there exists a future-directed timelike curve from $x$ to $y$, and
for all $x < y$, there exists a future-directed causal curve from $x$ to $y.$
\end{definition}

\begin{definition}[{\cite[Definition 2.19]{ACS20}}]
Let $(X, \ll, \leq, d, \tau)$ be a Lorentzian pre-length space and $x \in X.$
A neighborhood $U$ of $x$ is called \emph{weakly causally closed} if
for any sequences $(p_n)_n, (q_n)_n$ in $U$ satisfying
$p_n \le_U q_n$ for all $n \in \N$ and $p_n \to p \in U,\ q_n \to q \in U$,
then $p \leq_U q$. 
A Lorentzian pre-length space $(X, \ll, \leq, d, \tau)$ is called 
\emph{locally weakly causally closed} if
each point in $X$ has a weakly causally closed neighborhood.
\end{definition}

\begin{definition}[{\cite[Definition 3.16]{KS18}}]
A Lorentzian pre-length space $(X, \ll, \break \leq, d, \tau)$ is called \emph{localizable} if
for all $x \in X, $ there exists an open neighborhood $\Omega_x \subset X$ of $x$ which satisfies the following properties:
\begin{enumerate}[(i)]
\item there is a constant $C > 0$ such that the $d$-length of $\gamma$ 
given by 
\begin{equation}\label{eq:d-arclength}
L^{d}(\gamma) \coloneqq \sup \left\{\sum_{i = 0}^{n-1} d(\gamma(t_i), \gamma(t_{i+1})) ;
n \in \N,
a = t_0 < t_1 < \dots < t_n = b \right\}
\end{equation}
satisfies $L^{d}(\gamma) \leq C$
for all causal curves $\gamma$ contained in $\Omega_x;$
\item there exists a continuous map $\omega_x \colon \Omega_x \times \Omega_x \to [0, \infty)$
which satisfies the following properties:
\begin{enumerate}
\item $(\Omega_x, \ll|_{\Omega_x \times \Omega_x}, \leq|_{\Omega_x \times \Omega_x}, d|_{\Omega_x \times \Omega_x}, \omega_x)$
is a Lorentzian pre-length space,
\item $I^{\pm} (y) \cap \Omega_x \neq \emptyset$ for every $y \in \Omega_x$;
\end{enumerate}
\item for all $p, q \in \Omega_x$ with $p < q$,  
there exists a future-directed causal curve $\gamma_{p, q}$ from $p$ to $q$
which satisfies the following properties:
\begin{enumerate}
\item $L_{\tau}(\gamma_{p, q}) \geq L_\tau(\lambda)$
for all future-directed causal curves $\lambda$ from $p$ to $q$ contained in $\Omega_x,$
\item $L_{\tau}(\gamma_{p, q}) = \omega_x(p, q) \leq \tau(p, q).$
\end{enumerate}
\end{enumerate}
If every point $x \in X$ has a neighborhood basis of open sets $\Omega_x$ 
satisfying (i)-(iii), then $(X, \ll, \leq, d, \tau)$ is called \emph{strongly localizable}. 
\end{definition}

We now define the main object: Lorentzian length spaces.

\begin{definition}[{\cite[Definition 3.22]{KS18}}]
A \emph{Lorentzian length space} is a causally path connected,
locally causally closed, and localizable Lorentzian pre-length space satisfying
\begin{equation}
\tau(x, y)=
\begin{cases}
\displaystyle \sup_{\gamma \in \Omega_{x,y}} L_\tau(\gamma), & \Omega_{x,y} \neq \emptyset,\\
0, & \Omega_{x,y} = \emptyset,
\end{cases}
\end{equation}
where $\Omega_{x,y}$ is the set of all future-directed causal curves from $x$ to $y$. 
\end{definition}

A fundamental source of Lorentzian length spaces comes 
from spacetimes with low-regularity Lorentzian metrics. 
We note that any $C^0$-spacetime can be formulated 
as a Lorentzian pre-length space by using 
nearly timelike curves~\cite[Corollary 3.6]{L25}. 
However, spacetimes with regularity strictly less than $C^{0,1}$ 
may fail to be Lorentzian length spaces 
due to the lack of (weakly) local causal closedness~\cite[Example 3.7]{L25}. 

In contrast, for spacetimes with continuous metrics satisfying 
suitable assumptions, the standard construction yields Lorentzian length spaces. 
Fix any complete Riemannian metric $h$ on $M$ and let $d^h$ be 
the associated Riemannian distance. 
Let $T$ be the usual Lorentzian distance induced by $g$ 
(cf.~Definition \ref{def:Lorentzian-distance}).

\begin{fact}[{\cite[Proposition 5.12]{KS18}}]\label{fact:LSS}
    Let $(M,g)$ be a $C^0$-spacetime that is causally plain
    (cf.~{\cite[Definition 1.16]{CG12}}) and strongly causal
    (cf.~{\cite[Definition 14.11]{O83}}).
    Then, $(M, \ll, \leq, d^h, T)$ is a strongly localizable Lorentzian length space. 
    In particular, if $(M, g)$ is a strongly causal $C^{0,1}$-spacetime, then 
    $(M, \ll, \leq, d^h, T)$ is a strongly localizable Lorentzian length space. 
\end{fact}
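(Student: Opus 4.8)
The plan is to verify, in order, the three layers of the claim: first that $(M,\ll,\leq,d^{h},T)$ is a Lorentzian pre-length space; then that it is causally path connected, locally weakly causally closed, and localizable, hence a Lorentzian length space; and finally that the localizing neighbourhoods may be chosen to form a neighbourhood basis, upgrading localizability to strong localizability. The $C^{0,1}$ statement is then immediate, since locally Lipschitz metrics are causally plain. Throughout I would track carefully where the two standing hypotheses, causal plainness and strong causality, are used, as these are precisely what force the $C^{0}$-causality theory to behave like the smooth one.

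For the pre-length-space axioms: that $(M,\ll,\leq)$ is a causal space --- transitivity of $\ll$ and $\leq$, reflexivity of $\leq$, and $x\ll y\Rightarrow x\leq y$ --- is routine from the curve-based definitions of $\ll$ and $\leq$ via concatenation of curves. Lower semicontinuity of $T$ for merely continuous $g$ follows from openness of the chronological relation together with a local comparison against smooth metrics with slightly narrower light cones, exactly as in the standard treatment of $C^{0}$-causality; the reverse triangle inequality for $x\leq y\leq z$ is the classical concatenation argument. The genuinely delicate axiom is $T(x,y)>0\iff x\ll y$: the forward implication is immediate from the definition of $T$, but the converse is exactly where causal plainness enters, since it excludes the causal bubbling phenomenon of \cite{CG12} and guarantees that positive Lorentzian distance is realized by an honestly timelike connecting curve (equivalently, that Fact~\ref{pushup} holds at the spacetime level). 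Finally $T(x,y)=0$ whenever $x\not\leq y$ is immediate, as then no future-directed causal curve joins $x$ to $y$.

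Next I would construct the localizing neighbourhoods. Around each $x\in M$ fix a chart in which $g(x)$ is the Minkowski metric and, for a cofinal sequence of scales, choose cylindrical neighbourhoods $\Omega_{x}$ with compact closure on which there are constant-coefficient Lorentzian metrics whose light cones are respectively narrower and wider than those of $g$, both converging to $g(x)$ as the scale shrinks. Strong causality lets me take each $\Omega_{x}$ causally convex; together with the wider-cone bound this confines causal curves in $\Omega_{x}$ to a compact coordinate box on which they are uniformly Lipschitz, which simultaneously gives the uniform $d^{h}$-length bound in (i) and weak local causal closedness. I then define $\omega_{x}$ to be the time separation of $\Omega_{x}$ computed using only causal curves remaining in $\Omega_{x}$; the flat comparison functions pinch $\omega_{x}$ from both sides by continuous functions, so $\omega_{x}$ is continuous, and, $\Omega_{x}$ being cylindrical, $I^{\pm}(y)\cap\Omega_{x}\neq\emptyset$ for each $y\in\Omega_{x}$ while $\omega_{x}$ inherits the pre-length-space axioms, giving (ii). For (iii), given $p<q$ in $\Omega_{x}$ I take an $L_{\tau}$-maximizing sequence of causal curves in $\Omega_{x}$, reparametrize them by $h$-arc length, invoke the Arzel\`a--Ascoli theorem using the uniform length bound, and check that the limit curve is causal; combining upper semicontinuity of $L_{\tau}$ along uniformly converging causal curves with lower semicontinuity of $T$ then shows the limit $\gamma_{p,q}$ is maximal among causal curves in $\Omega_{x}$ with $L_{\tau}(\gamma_{p,q})=\omega_{x}(p,q)\leq T(p,q)$. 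Since the $\Omega_{x}$ shrink to $x$, they form a neighbourhood basis, so the space is strongly localizable. Causal path connectedness is immediate from the definitions of $\ll$ and $\leq$, and the length-space identity $T(x,y)=\sup_{\gamma}L_{\tau}(\gamma)$ follows because $T(x,y)$ is by definition the supremum of Lorentzian arc lengths of causal curves from $x$ to $y$, and for causal curves in a $C^{0}$-spacetime $L_{\tau}$ recovers that arc length, so the two suprema coincide.

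The hard part will be clause (iii) of localizability: producing the local maximizer $\gamma_{p,q}$ with $L_{\tau}(\gamma_{p,q})=\omega_{x}(p,q)$. In $C^{0}$-regularity there is no geodesic equation available, so this rests entirely on a limit-curve argument together with the somewhat delicate semicontinuity behaviour of the $\tau$-length under uniform convergence of causal curves; and the whole construction would collapse without causal plainness, since then push-up can fail, $T$ can behave pathologically, and both the pre-length-space axioms (iii)--(iv) and the length-space identity can break.
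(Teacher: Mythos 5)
This statement is quoted as a \emph{Fact} from \cite[Proposition 5.12]{KS18}; the paper gives no proof of it, so the only meaningful benchmark is the argument in the cited source. Your sketch reconstructs essentially that argument: verify the pre-length-space axioms (with causal plainness supplying push-up and the equivalence $T(x,y)>0\iff x\ll y$), build cylindrical localizing neighbourhoods with narrower/wider constant-cone comparison metrics, use strong causality to get causal convexity and the uniform $d^h$-length bound, obtain local maximizers by Arzel\`a--Ascoli plus semicontinuity of the length functionals, and observe that Lipschitz metrics are automatically causally plain for the $C^{0,1}$ case. The decomposition and the role you assign to each hypothesis match the reference.

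Two steps are stated too loosely to stand as written. First, ``the flat comparison functions pinch $\omega_x$ from both sides by continuous functions, so $\omega_x$ is continuous'' is not a valid inference: a function sandwiched between two continuous functions need not be continuous. The actual argument needs lower semicontinuity of $\omega_x$ (openness of the local chronological relation) and, separately, upper semicontinuity via a limit-curve argument exploiting the uniform Lipschitz bound on causal curves in $\Omega_x$; the comparison metrics enter in controlling these limit curves, not in a squeeze. Second, causal plainness does not guarantee that a positive time separation is \emph{realized} by a timelike curve --- maximizers need not exist at this level of generality --- it guarantees only that $T(x,y)>0$ forces the \emph{existence} of some timelike curve from $x$ to $y$ (via push-up). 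Neither issue changes the architecture of the proof, but both would need to be repaired in a full write-up. You should also make explicit the identification $L_\tau=L_g$ on causal curves (used for the length-space identity), which is a separate result in \cite{KS18} rather than a definition.
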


To avoid pathological examples, we need to impose some causality conditions.
We now introduce the following conditions.

\begin{definition}[{\cite[Definition 2.35]{KS18}}]\label{non-totally}\label{causal}
A Lorentzian length space $(X, \ll, \leq, \break d, \tau)$ is called
\begin{enumerate}
\item \emph{causal} if $x \leq y$ and $y \leq x$ implies $x=y$ for all $x, y \in X$, 
\item \emph{non-totally imprisoning} if for every compact set $K \subset X$, 
there is $C > 0$ such that the $d$-length in \eqref{eq:d-arclength} of 
all causal curves in $K$ is bounded by $C$, 
\item \emph{globally hyperbolic} if $(X, \ll, \leq, d, \tau)$ is non-totally imprisoning 
and the \emph{causal diamond} $J(x, y) \coloneqq J^{+}(x) \cap J^{-}(y)$ is compact for all $x, y \in X.$
\end{enumerate}
\end{definition}

We collect basic consequences of global hyperbolicity that we shall use repeatedly.   

\begin{fact}[{\cite[Theorem 3.26 (ii)]{KS18}}]\label{imply}
Let $(X, \ll, \leq, d, \tau)$ be a Lorentzian length space. 
If $(X, \ll, \leq, d, \tau)$ is non-totally imprisoning, then 
$(X, \ll, \leq, d, \tau)$ is causal. 
\end{fact}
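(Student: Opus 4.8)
The plan is to argue by contradiction, exploiting the observation that a closed causal curve, traversed repeatedly, yields causal curves of arbitrarily large $d$-length confined to one fixed compact set.

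First I would suppose that $(X, \ll, \leq, d, \tau)$ is \emph{not} causal, so that there exist $x, y \in X$ with $x \neq y$, $x \leq y$, and $y \leq x$; in particular $x < y$ and $y < x$. Since a Lorentzian length space is causally path connected, this produces a future-directed causal curve $\gamma_{1} \colon [0,1] \to X$ from $x$ to $y$ and a future-directed causal curve $\gamma_{2} \colon [0,1] \to X$ from $y$ to $x$. I then set $K \coloneqq \Image(\gamma_{1}) \cup \Image(\gamma_{2})$, which is compact as a finite union of continuous images of compact intervals.

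Next, for each $n \in \N$ I would form the $n$-fold concatenation $\sigma_{n} \colon [0, 2n] \to X$ of the loop $\gamma_{1} \ast \gamma_{2}$ with itself, parametrizing the $k$-th copy of $\gamma_{1}$ on $[2k, 2k+1]$ and the $k$-th copy of $\gamma_{2}$ on $[2k+1, 2k+2]$. The curve $\sigma_{n}$ is non-constant since $x \neq y$, and it is locally Lipschitz because a finite concatenation of locally Lipschitz pieces is locally Lipschitz (Lipschitzness on $[a,c]$ and on $[c,b]$ gives Lipschitzness on $[a,b]$). It is order-preserving because every point of $\Image(\gamma_{1})$ satisfies $x \leq p \leq y$ and every point of $\Image(\gamma_{2})$ satisfies $y \leq q \leq x$; combining this with $x \leq y$, $y \leq x$ and transitivity of $\leq$, any point reached at an earlier parameter precedes any point reached at a later one. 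Hence each $\sigma_{n}$ is a future-directed causal curve with $\Image(\sigma_{n}) \subset K$.

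Finally, from the definition of $d$-length in \eqref{eq:d-arclength}, evaluating on each of the $n$ loops the partition consisting of its two endpoints $x$ and $y$, one gets $L^{d}(\sigma_{n}) \geq n\, d(x,y)$, and $d(x,y) > 0$ because $x \neq y$. Thus the $d$-lengths of causal curves contained in the compact set $K$ are unbounded, contradicting non-total imprisonment; therefore $(X, \ll, \leq, d, \tau)$ is causal. I expect the only delicate point to be the verification that the iterated loop genuinely meets the definition of a causal curve — in particular order-preservation across the gluing parameters — but this reduces to the short transitivity argument above; alternatively one can avoid the iteration entirely by first fixing the bound $C$ furnished by non-total imprisonment for $K$ and then choosing a single $n$ with $n\, d(x,y) > C$.
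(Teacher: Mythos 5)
Your argument is correct: the paper states this result as a cited fact (from Kunzinger--S\"amann) without reproducing a proof, and your closed-causal-loop iteration is precisely the standard argument used there. All the delicate points you flag --- order-preservation across the gluings via transitivity of $\leq$ together with $x \leq y \leq x$, Lipschitz continuity of the finite concatenation, and the lower bound $L^{d}(\sigma_n) \geq n\,d(x,y) \to \infty$ with $d(x,y)>0$ inside the fixed compact set $\Image(\gamma_1)\cup\Image(\gamma_2)$ --- are handled adequately, so nothing is missing.
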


\begin{fact}[{\cite[Proposition 3.13]{ACS20}}]\label{closed}
Let $(X, \ll, \leq, d, \tau)$ be a globally hyperbolic Lorentzian length space. 
Then the sets $J^{\pm}(x)$ are closed for all $x \in X$. 
\end{fact}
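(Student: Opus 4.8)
The plan is to prove that $J^{+}(x)$ is closed for every $x \in X$; the closedness of $J^{-}(x)$ then follows by the symmetric (time-dual) argument. So fix $x \in X$, let $(y_n)_n$ be a sequence in $J^{+}(x)$ converging to some $y \in X$, and aim to show $x \leq y$.

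First I would produce a point strictly to the future of $y$. By localizability there is an open neighborhood $\Omega_y$ of $y$ with $I^{+}(w) \cap \Omega_y \neq \emptyset$ for every $w \in \Omega_y$; applying this with $w = y$ yields a point $z$ with $y \ll z$, and hence $\tau(y, z) > 0$ by property (iii) in the definition of a Lorentzian pre-length space.

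Next I would push the tail of the sequence into the chronological past of $z$. Since $(y_n, z) \to (y, z)$ in $X \times X$ and $\tau$ is lower semicontinuous, $\liminf_n \tau(y_n, z) \geq \tau(y, z) > 0$, so there is $N \in \N$ with $\tau(y_n, z) > 0$, hence $y_n \ll z$ and in particular $y_n \leq z$, for all $n \geq N$. Combined with $x \leq y_n$ this gives $y_n \in J^{+}(x) \cap J^{-}(z) = J(x, z)$ for all $n \geq N$. By global hyperbolicity $J(x, z)$ is compact, hence closed in the metric space $(X, d)$; since $y_n \to y$ we conclude $y \in J(x, z) \subseteq J^{+}(x)$, i.e. $x \leq y$.

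The argument is short, and the only step requiring any care is the first one: one must invoke localizability (specifically property (ii)(b)) to guarantee $I^{+}(y) \neq \emptyset$, which is precisely what makes the lower-semicontinuity trick available. Everything else is bookkeeping with the defining properties of a Lorentzian (pre-)length space together with compactness of causal diamonds; no use of non-total imprisonment or of the length-space supremum axiom is needed beyond what is already packaged into the hypothesis of global hyperbolicity.
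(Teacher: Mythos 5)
Your argument is correct: the combination of localizability (ii)(b) to produce $z \in I^{+}(y)$, lower semicontinuity of $\tau$ to push the tail of $(y_n)_n$ into $J^{-}(z)$, and compactness of $J(x,z)$ is exactly the standard proof of this statement. Note that the paper itself gives no proof here --- it imports the result as a Fact from \cite[Proposition 3.13]{ACS20} --- and your argument is essentially the one found in that reference, so there is nothing to object to.
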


\begin{fact}[{\cite[Theorem 3.28]{KS18}}]\label{finite}\label{cont}
Let $(X, \ll, \leq, d, \tau)$ be a globally hyperbolic Lorentzian length space. 
Then $\tau$ is (i) finite, i.e., $\tau(x, y)\neq\infty$ for all $x,y\in X$, and (ii) continuous.
\end{fact}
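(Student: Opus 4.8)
The plan is to establish (i) first and then use it in (ii). For finiteness, observe that $\tau(x,y)>0$ forces $\Omega_{x,y}\neq\emptyset$, so fix $x\leq y$ joined by a future-directed causal curve; every such curve lies in the compact causal diamond $J(x,y)$. For each $z\in J(x,y)$ I would pick a localizing neighborhood $\Omega_z$ and, using that $(X,d)$ is metrizable hence regular, an open set $V_z\ni z$ with $\overline{V_z}\subseteq\Omega_z$; since $\overline{V_z}\cap J(x,y)$ is compact and $\omega_z$ is continuous, $\omega_z$ is bounded on it by some $M_z$. Take a finite subcover $V_{z_1},\dots,V_{z_N}$ of $J(x,y)$, let $\delta$ be a Lebesgue number, $M:=\max_i M_{z_i}$, and let $C$ bound the $d$-length of all causal curves in $J(x,y)$ (non-total imprisonment). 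Then for an arbitrary future-directed causal curve $\gamma$ from $x$ to $y$ I would subdivide $\gamma$ into at most $\lceil C/\delta\rceil+1$ consecutive arcs of $d$-length $<\delta$; each arc has $d$-diameter $<\delta$, hence lies in some $\Omega_{z_i}$, and by additivity of $L_\tau$ under concatenation together with the maximality property~(iii) of localizing neighborhoods, $L_\tau$ of each arc is at most $\omega_{z_i}$ of its endpoints, hence at most $M$. Summing, $L_\tau(\gamma)\leq(\lceil C/\delta\rceil+1)M$ uniformly in $\gamma$, so $\tau(x,y)=\sup_\gamma L_\tau(\gamma)<\infty$. The point to stress is that neither hypothesis suffices alone --- non-total imprisonment controls only $d$-length, localizability controls $\tau$-length only locally --- and global hyperbolicity is exactly what lets one splice them together on the compact diamond.

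For continuity, lower semicontinuity is part of the definition of a Lorentzian pre-length space, so it remains to prove upper semicontinuity: $x_n\to x$, $y_n\to y$ imply $\limsup_n\tau(x_n,y_n)\leq\tau(x,y)$. If $x\not\leq y$, I would argue by contradiction: passing to a subsequence with $\tau(x_n,y_n)\geq\varepsilon>0$ gives $x_n\ll y_n$; a future-directed timelike curve running into $x$ (which exists by localizability and causal path connectedness) yields $\tilde x_k\ll x$ with $\tilde x_k\to x$, and for fixed $k$ and large $n$ one has $\tilde x_k\ll x_n\leq y_n$, hence $\tilde x_k\ll y_n$ by Fact~\ref{pushup}; since $J^{+}(\tilde x_k)$ is closed (Fact~\ref{closed}) and $y_n\to y$, we get $\tilde x_k\leq y$, and letting $k\to\infty$ with $J^{-}(y)$ closed gives $x\leq y$, a contradiction. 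Hence $\tau(x_n,y_n)\to 0=\tau(x,y)$ in this case.

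When $x\leq y$, pass to a subsequence realizing $L:=\limsup_n\tau(x_n,y_n)$; we may assume $L>0$, so $x_n\ll y_n$ for large $n$, and I would choose near-maximal future-directed causal curves $\sigma_n$ from $x_n$ to $y_n$ with $L_\tau(\sigma_n)\geq\tau(x_n,y_n)-1/n$, so that $L_\tau(\sigma_n)\to L$. Picking $p\ll x$ and $q\gg y$, for large $n$ one has $p\ll x_n$ and $y_n\ll q$, so $\sigma_n\subseteq J(p,q)$ with uniformly bounded $d$-length; the limit curve theorem of~\cite{KS18} then yields a subsequence converging uniformly to a future-directed causal curve $\sigma$ from $x$ to $y$ (if the limit degenerates to a point then $x=y$, and an even simpler version of the estimate below applies inside a single localizing neighborhood of $x$). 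Now comes the key step: cover $\sigma$ by finitely many localizing neighborhoods and choose a partition $a=t_0<\dots<t_m=b$ fine enough that every arc $\sigma|_{[t_i,t_{i+1}]}$ --- and, by uniform convergence, every arc $\sigma_n|_{[t_i,t_{i+1}]}$ for large $n$ --- lies in one of them, say $\Omega_{z_{j(i)}}$. Then $L_\tau(\sigma_n|_{[t_i,t_{i+1}]})\leq\omega_{z_{j(i)}}(\sigma_n(t_i),\sigma_n(t_{i+1}))$ by property~(iii), the right side converges to $\omega_{z_{j(i)}}(\sigma(t_i),\sigma(t_{i+1}))\leq\tau(\sigma(t_i),\sigma(t_{i+1}))$ by continuity of $\omega_{z_{j(i)}}$, and summing the finitely many arcs (legitimate since part~(i) makes everything finite) gives $\limsup_n L_\tau(\sigma_n)\leq\sum_i\tau(\sigma(t_i),\sigma(t_{i+1}))$. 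Since refining a partition cannot increase this sum, by the reverse triangle inequality, taking the infimum over all sufficiently fine partitions yields $\limsup_n L_\tau(\sigma_n)\leq L_\tau(\sigma)\leq\tau(x,y)$, the last inequality because $\sigma$ joins $x$ to $y$ and $\tau$ is the supremal $\tau$-length; hence $L\leq\tau(x,y)$.

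I expect the main obstacle to be precisely that last estimate: one cannot obtain $L_\tau(\sigma)\geq\limsup_n L_\tau(\sigma_n)$ from naive semicontinuity, since $\tau$-length can drop in the limit, and the remedy is to route the comparison through the \emph{continuous} functions $\omega_z$ attached to localizing neighborhoods together with their maximality property --- which is exactly the structural role of localizability. Collecting the cases $x\not\leq y$ and $x\leq y$ (the latter covering both $x\ll y$ and null-related $x$, $y$) gives upper semicontinuity everywhere, and combined with lower semicontinuity this proves $\tau$ continuous.
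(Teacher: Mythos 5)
The paper does not actually prove this statement---it is imported verbatim as a Fact from \cite[Theorem~3.28]{KS18}---and your argument is correct and reconstructs essentially the proof given there: finiteness by covering the compact causal diamond with localizing neighborhoods, bounding each continuous $\omega_z$ on a compact piece, and splicing these local bounds together via the uniform $d$-length bound from non-total imprisonment; and upper semicontinuity of $\tau$ via near-maximizing curves, the limit curve theorem, and upper semicontinuity of $L_\tau$ under uniform convergence routed through the continuous local time separations $\omega_z$ (which is \cite[Proposition~3.17]{KS18}, here inlined). The only spots worth tightening are routine: the endpoints of a sub-arc may coincide (then the arc is constant by causality and contributes $0$, so property~(iii) of localizability, stated only for $p<q$, is not needed there), and steps of the form ``$p\ll x$ implies $p\ll x_n$ for large $n$'' should be justified explicitly by lower semicontinuity of $\tau$, i.e., openness of $I^{\pm}$.
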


\begin{fact}[{\cite[Theorem 3.30]{KS18}}]\label{geodesic}
Let $(X, \ll, \leq, d, \tau)$ be a globally hyperbolic Lorentzian length space. 
Then $(X, \ll, \leq, d, \tau)$ is geodesic, i.e., for all $x, y \in X$ with $x < y$, 
there exists a maximal future-directed causal curve $\gamma$ from $x$ to $y$. 
\end{fact}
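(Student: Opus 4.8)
The plan is to apply the direct method of the calculus of variations. By the very definition of a Lorentzian length space, $\tau(x,y)$ is the supremum of the $\tau$-lengths of all future-directed causal curves from $x$ to $y$ (the set $\Omega_{x,y}$), so it suffices to produce a single causal curve realizing this supremum. I would extract a maximizing sequence of such curves, pass to a limit curve using compactness, and verify that the limit is again a future-directed causal curve from $x$ to $y$ whose $\tau$-length attains $\tau(x,y)$.

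First I would note that $\Omega_{x,y} \neq \emptyset$: since $x < y$, causal path connectedness supplies at least one future-directed causal curve from $x$ to $y$, and by Fact~\ref{finite} the value $\tau(x,y)$ is finite. Choose future-directed causal curves $\gamma_n \colon [0,1] \to X$ from $x$ to $y$ with $L_\tau(\gamma_n) \to \tau(x,y)$. Each $\gamma_n$ has image in the causal diamond $J(x,y) = J^{+}(x) \cap J^{-}(y)$, which is compact by global hyperbolicity, and by the non-totally imprisoning hypothesis there is a uniform constant $C > 0$ bounding the $d$-arclength $L^{d}(\gamma_n)$. Since $x \neq y$ forces $L^{d}(\gamma_n) \geq d(x,y) > 0$, I may reparametrize each $\gamma_n$ proportionally to $d$-arclength on $[0,1]$, making the family $L^{d}(\gamma_n)$-Lipschitz with a common bound $C$, hence equicontinuous, with values in the compact set $J(x,y)$.

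The technical heart is the limit-curve step. By Arzelà–Ascoli I would pass to a subsequence (still denoted $\gamma_n$) converging uniformly to a Lipschitz map $\gamma \colon [0,1] \to X$ with $\gamma(0) = x$ and $\gamma(1) = y$; in particular $\gamma$ is non-constant. To show $\gamma$ is causal I would invoke local weak causal closedness: covering the compact image $\gamma([0,1])$ by weakly causally closed neighborhoods and using a Lebesgue-number argument, I can choose a partition $0 = t_0 < \dots < t_m = 1$ so fine that each subarc $\gamma([t_i,t_{i+1}])$, and by uniform convergence also $\gamma_n([t_i,t_{i+1}])$ for large $n$, lies in a single such neighborhood $U$. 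Then $\gamma_n(t_i) \leq_U \gamma_n(t_{i+1})$ passes to the limit to give $\gamma(t_i) \leq \gamma(t_{i+1})$, and transitivity of $\leq$ yields $\gamma(s) \leq \gamma(t)$ for all $s < t$. I expect this to be the main obstacle, since causal closedness is only a local condition and must be threaded along the curve through a suitably fine partition; care is also needed so that the reparametrized limit does not collapse, which is exactly where the uniform bound $C$ from non-total imprisonment enters.

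Finally I would verify maximality, where the continuity of $\tau$ (Fact~\ref{finite}) does the work. For any fixed partition $0 = s_0 < \dots < s_k = 1$, continuity gives $\sum_{j} \tau(\gamma_n(s_j), \gamma_n(s_{j+1})) \to \sum_{j} \tau(\gamma(s_j), \gamma(s_{j+1}))$, while the infimum definition of $\tau$-length yields $L_\tau(\gamma_n) \leq \sum_{j} \tau(\gamma_n(s_j), \gamma_n(s_{j+1}))$. Passing to the limit, $\tau(x,y) = \lim_n L_\tau(\gamma_n) \leq \sum_{j} \tau(\gamma(s_j), \gamma(s_{j+1}))$, and taking the infimum over all partitions gives $\tau(x,y) \leq L_\tau(\gamma)$. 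The reverse inequality $L_\tau(\gamma) \leq \tau(x,y)$ is immediate from the definition of $\tau$ as the supremum of $\tau$-lengths over $\Omega_{x,y}$. Hence $L_\tau(\gamma) = \tau(x,y) = \tau(\gamma(0),\gamma(1))$, so $\gamma$ is a maximal future-directed causal curve from $x$ to $y$, as required.
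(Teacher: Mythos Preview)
The paper does not give its own proof of this statement; it is quoted as a Fact from \cite[Theorem 3.30]{KS18}, so there is nothing in the paper to compare against. Your sketch is the standard direct-method argument that \cite{KS18} uses (a limit-curve theorem together with upper semicontinuity of $L_\tau$), and it is correct in substance.

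One small slip worth flagging: after obtaining $\gamma(t_i) \leq \gamma(t_{i+1})$ on a fixed partition, ``transitivity of $\leq$'' only yields the relation on partition points, not on all pairs $s<t$. The fix is routine---either observe that for arbitrary $s<t$ you may insert $s,t$ into the partition (the Lebesgue-number bound still applies), or bypass the local argument altogether: in a globally hyperbolic Lorentzian length space the relation $\leq$ is closed on $X\times X$ (this is the content of \cite[Proposition~3.13]{ACS20}, stronger than the closedness of $J^\pm(x)$ recorded in Fact~\ref{closed}), so $\gamma_n(s)\leq\gamma_n(t)$ passes directly to $\gamma(s)\leq\gamma(t)$. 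The maximality step, using continuity of $\tau$ from Fact~\ref{cont} and the infimum definition of $L_\tau$, is clean.
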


\section{Completeness conditions and their implications}

In this section, we extend three conditions---finite compactness, 
timelike Cauchy completeness, and Condition A---to Lorentzian pre-length spaces, 
and prove Theorem \ref{thm:LLS}. 
Theorem \ref{thm:LLS} follows directly from 
Theorem \ref{timelikeCauchycomplete} and Theorem \ref{conditionA}. 

The following definition is a Lorentzian analogue of bounded compactness.
Busemann \cite{B67} first introduced it for timelike spaces.

\begin{definition}\label{def:finitely}
A Lorentzian pre-length space $(X, \ll, \leq, d, \tau)$ is called \emph{finitely compact}
if for every $B > 0$, every $p, q \in X, $ and every sequence of points $(x_n)_n$ in $X$ satisfying
$p \ll q \leq x_n\ (\text{resp.}\ x_n \leq q \ll p)$
and $\tau(p, x_n) \leq B\ (\text{resp.}\ \tau(x_n, p) \leq B)$, 
there is an accumulation point of $(x_n)_n$ in $X$.
\end{definition}

The following lemma is a characterization of finite compactness for
globally hyperbolic Lorentzian length spaces. 

\begin{lemma}\label{compact}
Let $(X, \ll, \leq, d, \tau)$ be a globally hyperbolic Lorentzian length space. Then
$(X, \ll, \leq, d, \tau)$ is finitely compact if and only if
for every $B > 0$ and every $p,q \in X$ with $p \ll q, $
the set $K_1 \coloneqq \{x \in X ; p \ll q \leq x, \tau(p, x) \leq B\}$ is compact, and
for every $B > 0$ and every $p,q \in X$ with $q \ll p, $
the set $K_2 \coloneqq \{x \in X ; x \leq q \ll p, \tau(x, p) \leq B\}$ is compact.
\end{lemma}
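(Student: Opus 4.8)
The plan is to prove the two directions separately, exploiting the symmetry between the future and past statements (so it suffices to treat one case, say the sets $K_1$ and $K_2$ determined by future/past cones, noting the past case is dual).

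For the backward direction, assume the sets $K_1$ and $K_2$ are compact for all choices of $B$ and $p \ll q$. Given a sequence $(x_n)_n$ with $p \ll q \leq x_n$ and $\tau(p,x_n) \leq B$, the whole sequence lies in the compact set $K_1$ (using that $\tau(p,x_n)\le B$ and $p\ll q\le x_n$), hence has an accumulation point in $X$; the past case is identical with $K_2$. This direction is essentially immediate.

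For the forward direction, assume finite compactness and fix $B>0$ and $p \ll q$; I want to show $K_1$ is compact. Since $X$ is (metrizable, via $d$, hence) first countable, it suffices to show $K_1$ is sequentially compact, i.e. every sequence in $K_1$ has a subsequence converging \emph{to a point of $K_1$}. Finite compactness directly gives an accumulation point $x \in X$ of any sequence $(x_n)_n \subset K_1$, so the real content is to show $x \in K_1$, i.e. that $K_1$ is closed; compactness then follows from sequential compactness. To see $K_1$ is closed: if $x_n \in K_1$ and $x_n \to x$, then $q \leq x_n$ for all $n$ together with $\operatorname{Fact}~\ref{closed}$ (closedness of $J^+(q)$ in a globally hyperbolic Lorentzian length space) gives $q \leq x$, and hence $p \ll q \leq x$ gives $p \ll x$ by push-up ($\operatorname{Fact}~\ref{pushup}$); finally $\tau(p,x) \leq B$ follows from $\tau(p,x_n)\le B$ and continuity of $\tau$ ($\operatorname{Fact}~\ref{cont}$, valid under global hyperbolicity). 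So $x \in K_1$, proving $K_1$ closed, and then every sequence in $K_1$ has a subsequence converging within $K_1$ by finite compactness, so $K_1$ is compact. The argument for $K_2$ is dual, using closedness of $J^-(p)$, the mixed push-up $x \le q \ll p \Rightarrow x \ll p$, and continuity of $\tau$.

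The main obstacle—such as it is—is organizational rather than deep: one must be careful that finite compactness as stated only yields an accumulation point in $X$, not convergence within $K_1$, so the closedness of $K_1$ (resp.\ $K_2$) must be established independently, and this is exactly where the global hyperbolicity hypotheses (closed causal cones via $\operatorname{Fact}~\ref{closed}$ and continuity of $\tau$ via $\operatorname{Fact}~\ref{cont}$) are used. Once closedness is in hand, passing from "accumulation point" to "convergent subsequence with limit in $K_1$" is routine in the metrizable setting, and sequential compactness upgrades to compactness.
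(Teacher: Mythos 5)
Your proof is correct and follows essentially the same route as the paper: both directions hinge on showing $K_1$ (resp.\ $K_2$) is closed via Fact~\ref{closed} and the continuity of $\tau$, and then upgrading the accumulation points supplied by finite compactness to compactness of $K_1$. The only cosmetic difference is that the paper passes through limit-point compactness (citing Munkres) while you use sequential compactness directly; in the metrizable setting these are interchangeable.
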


\begin{proof}
Assume $(X, \ll, \leq, d, \tau)$ is finitely compact. 
Let $p, q \in X$ with $p \ll q$ and let $K_1$ be as above.
We first show that the set $K_1$ is closed.
We can write $K_1 = J^{+}(q) \cap \{x \in X ; \tau(p,x) \leq B\}.$
By Fact \ref{closed}, $J^{+}(q)$ is closed.
From Fact \ref{finite} (ii), $\{x \in X ; \tau(p,x) \leq B\}$ is also closed.
Thus, $K_1$ is closed. 

Let $A$ be any infinite subset of $K_1$. 
Then there is a sequence $(x_n)_n$ in $A$ with pairwise distinct terms. 
The finite compactness of $X$ shows that the sequence $(x_n)_n$ 
has an accumulation point in $K_1$, and this point is also an accumulation point of $A$.
Thus, $K_1$ is limit-point compact (cf.~{\cite[Chapter 3-7]{M75}}).
By {\cite[Theorem 7.4]{M75}}, $K_1$ is compact. 
Similarly, $K_2$ is compact.

Conversely, assume both $K_1$ and $K_2$ are compact.
Let $B > 0,\ p, q \in X, $ and
$(x_n)_n$ a sequence of points in $X$ satisfying $p \ll q \leq x_n$ and $\tau(p, x_n) \leq B.$
Then by the assumption,
it follows that there exists a subsequence of $(x_n)_n$
which converges to $x_\infty \in K_1.$
Thus, $x_\infty$ is an accumulation point of $(x_n)_n.$
Similarly, for the sequence of points $(x_n)_n$
with $x_n \leq q \ll p$ and $\tau(x_n, p) \leq B$,
we can find an accumulation point of $(x_n)_n.$
\end{proof}

The following definition is a Lorentzian analogue of metric completeness.
Busemann \cite{B67} first introduced it for timelike spaces.

\begin{definition}\label{def:timelike}
A Lorentzian pre-length space $(X, \ll, \leq, d, \tau)$ is called \emph{timelike Cauchy complete}
if for any sequence of points $(x_n)_n$ in $X$ and
any sequence of non-negative numbers $(B_n)_n$ which satisfy
$x_n \ll x_{n+1}\ (\text{resp.}\ x_{n+1} \ll x_n)$ for all $n \in \N, $
$\tau(x_n, x_{n+m}) \leq B_n\ (\text{resp.}\ \tau(x_{n+m}, x_n) \leq B_n)$
for all $n, m \in \N$ and $B_n \to 0$ as $n \to \infty$, 
the sequence $(x_n)_n$ converges in $X$.
\end{definition}

Busemann \cite{B67} first proved the following result for $C^2$-spacetimes. 
We extend this result to globally hyperbolic Lorentzian length spaces.

\begin{theorem}\label{timelikeCauchycomplete}
Let $(X, \ll, \leq, d, \tau)$ be a globally hyperbolic Lorentzian length space.
If $(X, \ll, \leq, d, \tau)$ is finitely compact, then $(X, \ll, \leq, d, \tau)$ is timelike Cauchy complete.
\end{theorem}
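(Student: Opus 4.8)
The plan is to prove the future-directed case of Definition~\ref{def:timelike}, the past-directed case being entirely symmetric (with $K_2$ in place of $K_1$). So fix a sequence $(x_n)_n$ with $x_n\ll x_{n+1}$ for all $n$ and non-negative numbers $(B_n)_n$ with $\tau(x_n,x_{n+m})\le B_n$ for all $n,m$ and $B_n\to 0$. The first step is to confine the tail of $(x_n)_n$ to a fixed compact set. Put $p:=x_1$ and $q:=x_2$, so that $p\ll q$ and $B_1\ge\tau(x_1,x_2)>0$. For every $n\ge 2$, transitivity of $\le$ along the chain gives $q=x_2\le x_n$, while the hypothesis gives $\tau(p,x_n)=\tau(x_1,x_n)\le B_1$. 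Hence $\{x_n:n\ge 2\}\subset K_1:=\{x\in X:p\ll q\le x,\ \tau(p,x)\le B_1\}$, and $K_1$ is compact by Lemma~\ref{compact}, using that $X$ is finitely compact. In particular $(x_n)_n$ has a subsequence $x_{n_k}\to x_\infty$ with $x_\infty\in K_1$.

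The heart of the argument is to upgrade this to convergence of the whole sequence, i.e.\ to show that $x_\infty$ is the \emph{only} subsequential limit. Let $x_{m_j}\to y$ along some convergent subsequence; I claim $y=x_\infty$. Since $(n_k)_k$ and $(m_j)_j$ are strictly increasing, $n_k\to\infty$ and $m_j\to\infty$. Now interleave: fixing $k$ and letting $j\to\infty$ through those indices with $m_j\ge n_k$, the relations $x_{n_k}\le x_{m_j}$ together with the closedness of $J^{+}(x_{n_k})$ (Fact~\ref{closed}) give $x_{n_k}\le y$; thus $x_{n_k}\in J^{-}(y)$ for all $k$, and since $J^{-}(y)$ is closed, letting $k\to\infty$ yields $x_\infty\le y$. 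Exchanging the roles of the two subsequences gives $y\le x_\infty$. A globally hyperbolic Lorentzian length space is non-totally imprisoning, hence causal by Fact~\ref{imply}, so $x_\infty=y$. Finally, since $K_1$ is a compact metric space containing all $x_n$ with $n\ge 2$, and every convergent subsequence of $(x_n)_n$ has limit $x_\infty$, the sequence $(x_n)_n$ itself converges to $x_\infty$, which is what we want.

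I expect the genuinely non-routine step to be this last uniqueness-of-accumulation-point argument: finite compactness (via Lemma~\ref{compact}) only supplies \emph{some} convergent subsequence and, a priori, says nothing about whether distinct subsequences could approach distinct limits; the interleaving trick, which relies on the closedness of causal diamonds and on causality, is exactly what rules this out. Everything else — the reverse triangle inequality giving $\tau(x_1,x_n)\le B_1$, continuity and finiteness of $\tau$ (Fact~\ref{cont}), and the compactness assertion of Lemma~\ref{compact} — enters only as bookkeeping; it is worth noting that this direction uses only $x_1\ll x_2$ and the single bound $\tau(x_1,x_n)\le B_1$, not the full Cauchy-type decay $B_n\to 0$.
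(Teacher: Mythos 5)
Your proof is correct. The first half coincides with the paper's: both take $p=x_1$, $q=x_2$, trap the tail of the sequence in the compact set $K_1$ via Lemma~\ref{compact}, and extract a convergent subsequence $x_{n_k}\to x_\infty$. Where you diverge is in upgrading subsequential convergence to convergence of the whole sequence. The paper proves the identity $\bigcap_{k}J^{+}(x_{n(k)})\cap J^{-}(x_\infty)=\{x_\infty\}$ (using closedness of $J^{\pm}$ and causality, much as you do) and then invokes Cantor's intersection theorem on the nested compact causal diamonds $J^{+}(x_{n(k)})\cap J^{-}(x_\infty)$ to show they eventually enter any neighbourhood of $x_\infty$, which captures the tail of the full sequence since every $x_n$ lies below $x_\infty$. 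You instead show directly that every convergent subsequence has the same limit --- via the interleaving argument with closedness of $J^{+}(x_{n_k})$ and $J^{-}(y)$ plus causality from Fact~\ref{imply} --- and then apply the standard fact that a sequence in a compact metric space with a unique subsequential limit converges. The two routes use the same causal-theoretic inputs (Facts~\ref{closed} and~\ref{imply}); yours replaces the Cantor-intersection step with a more elementary and arguably more transparent topological argument, at the cost of not exhibiting the shrinking causal diamonds, which the paper's formulation makes explicit. Your closing observation is also accurate, and applies equally to the paper's proof: neither argument uses the decay $B_n\to 0$, only the single bound $\tau(x_1,x_n)\le B_1$, so what is really proved is that every future-directed chronological chain with bounded time separation from its first term converges.
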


\begin{proof}
Assume $(X, \ll, \leq, d, \tau)$ is finitely compact. 
Let $(x_n)_n$ be a sequence of points in $X$ satisfying $x_n \ll x_{n+1}$
and $\tau(x_n, x_{n+m}) \leq B_n$ where $B_n \to 0.$
Define $B \coloneqq \sup_{n \in \N} B_n$. Then $B$ is finite since $B_n \to 0.$
By setting $p \coloneqq x_1$ and $q \coloneqq x_2$,  
the finite compactness of $X$ shows that there is 
a point $x_\infty \in X$ such that $x_{n(k)} \to x_\infty.$

In order to prove the original sequence $(x_n)_n$ converges to $x_\infty$, 
we claim the following:
\begin{align}\label{point}
\bigcap_{k \in \N} (J^{+}(x_{n(k)})) \cap J^{-}(x_\infty) = \{x_\infty\}.
\end{align}

In fact, \eqref{point} can be proved as follows. 
First, let $y \in \bigcap_{k \in \N} (J^{+}(x_{n(k)})) \cap J^{-}(x_\infty), $
then we see that $x_{n(k)} \in J^{-}(y)$ for all $k \in \N$ and $y \leq x_\infty.$
By Fact \ref{closed}, we see that $x_\infty \in J^{-}(y)$, which implies $x_\infty \leq y.$
Thus, we obtain $x_\infty \leq y \leq x_\infty.$
From Fact \ref{imply}, we see that $y=x_\infty$. 
Therefore, $\bigcap_{k \in \N} (J^{+}(x_{n(k)})) \cap J^{-}(x_\infty) \subset \{x_\infty\}$. 

For the reverse inclusion, it follows from the reflexivity of $\leq $ that $x_\infty \in J^{-}(x_\infty)$. 
In order to show that $x_\infty \in \bigcap_{k \in \N} J^{+}(x_{n(k)})$, 
suppose that there exists $K \in \N$ such that
$x_\infty \not\in J^{+}(x_{n(K)})$. 
Then, since $X$ is a regular topological space,
there exist open sets $V$ and $W$ satisfying
$J^{+}(x_{n(K)}) \subset V,\ x_\infty \in W$, and $V \cap W = \emptyset$. 
Note that for all $k \geq K$, $J^+(x_{n(k)}) \subset J^+(x_{n(K)})$ holds 
since $x_n \ll x_{n+1}$ and $x_{n(k)} \in J^{+}(x_{n(k)}) \subset V$ 
for all $k \ge K$, so $(x_{n(k)})_k$ cannot converge to $x_\infty \in W$, a contradiction.
This completes the proof of \eqref{point}.

From \eqref{point}, we see that for any open neighborhood $U$ of $x_\infty$, 
there exists $K \in \N$ such that
$J^{+}(x_{n(K)}) \cap J^{-}(x_\infty) \subset U$. 
Indeed, suppose that for all $k \in \N$, 
$F_k \coloneqq (J^{+}(x_{n(k)}) \cap J^{-}(x_\infty)) \setminus U \neq \emptyset$.  
Each $J^{+}(x_{n(k)}) \cap J^{-}(x_\infty)$ is compact by global hyperbolicity, hence 
$F_k$, being a closed subset thereof, is also compact.
Thus, $(F_k)_k$ is a decreasing sequence of closed compact non-empty sets. 
By Cantor's intersection theorem, $\cap_{k \in \N}F_k \neq \emptyset$. 
This contradicts \eqref{point} since $x_\infty \not \in F_k$.  
This implies that the original sequence $(x_n)_n$ converges to $x_\infty$. 
\end{proof}

We now introduce the notion of inextendible causal curves and geodesics 
to define Condition A (Definition \ref{def:conditionA}).

\begin{definition}[{\cite[Definition 3.10]{KS18}}]
Let $(X, \ll, \leq, d, \tau)$ be a Lorentzian pre-length space and
let $\gamma \colon [a, b) \to X\ (-\infty < a < b \leq \infty)$ be a future-directed
causal (resp. timelike) curve.
The curve $\gamma$ is called \emph{future extendible}
if there exists a future-directed causal (resp. timelike) curve
$\tilde{\gamma} \colon [a, b] \to X$ such that $\tilde{\gamma}|_{[a,b)} = \gamma.$
The curve $\gamma$ is called \emph{future inextendible} if it is not future extendible.
Similarly, we define the notion of past (in)extendibility.
\end{definition}

\begin{definition}[{\cite[Definition 1.7]{BS23}}]
Let $(X, \ll, \leq, d, \tau)$ be a Lorentzian pre-length space and $I \subset \R$ an interval.
A future-directed causal curve $\gamma \colon I \to X$ is called \emph{geodesic} if
for each $t \in I, $ there exists a neighborhood $[a, b] \subset I$ of $t$ such that $\gamma|_{[a, b]}$
is maximal (cf.~Definition \ref{def:maximal}).
\end{definition}

The following definition was first introduced
by Beem \cite{B76} for Lorentzian manifolds.
We extend this condition to Lorentzian length spaces. 

\begin{definition}\label{def:conditionA}
Let $c \in [0, \infty].$ The Lorentzian pre-length space
$(X, \ll, \leq, d, \tau)$ is said to satisfy \emph{Condition A} if
for any $x, y \in X$ with $x \ll y\ (\text{resp.}\ y \ll x$) and
any future (resp. past) inextendible future-directed (resp. past-directed) causal geodesic $\gamma(t)$
starting at $y$ defined on $[0, c), $
it holds that
$\tau(x, \gamma(t)) \to \infty\ (\text{resp.}\ \tau(\gamma(t), x) \to \infty)$ as $t \to c.$
\end{definition}

Beem \cite{B76} first proved the following result for $C^2$-spacetimes.
We extend this result to globally hyperbolic Lorentzian length spaces.

\begin{theorem}\label{conditionA}
Let $(X, \ll, \leq, d, \tau)$ be a globally hyperbolic Lorentzian length space.
If $(X, \ll, \leq, d, \tau)$ is timelike Cauchy complete, then $(X, \ll, \leq, d, \tau)$ satisfies Condition A.
\end{theorem}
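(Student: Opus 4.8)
The plan is to argue by contrapositive: assuming Condition A fails, we construct a timelike Cauchy sequence that does not converge, contradicting timelike Cauchy completeness. So suppose there are $x \ll y$ and a future inextendible future-directed causal geodesic $\gamma \colon [0, c) \to X$ with $\gamma(0) = y$ such that $\tau(x, \gamma(t)) \not\to \infty$ as $t \to c$. Then there is a constant $B > 0$ and a sequence $t_k \uparrow c$ with $\tau(x, \gamma(t_k)) \leq B$ for all $k$. The first step is to pass to a well-behaved subsequence: since $\gamma$ is causal and future-directed, $\gamma(t_k) \in J^+(y)$, and $x \ll y \leq \gamma(t_k)$ by pushup (Fact \ref{pushup}); thus every $\gamma(t_k)$ lies in the set $K_1 = \{z \in X : x \ll y \leq z,\ \tau(x, z) \leq B\}$. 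Here I would like to invoke finite compactness to extract a convergent subsequence — but finite compactness is not among our hypotheses, only timelike Cauchy completeness is. So instead the argument must build monotonicity directly from the geodesic.

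The key step is to use the fact that $\gamma$ is a \emph{geodesic}, hence locally maximal, to control the time separation along it. Set $s_k \coloneqq \tau(y, \gamma(t_k))$; this is a nondecreasing sequence (by the reverse triangle inequality (ii) and $\gamma(t_j) \leq \gamma(t_k)$ for $j < k$, since $\tau(y,\gamma(t_k)) \ge \tau(y,\gamma(t_j)) + \tau(\gamma(t_j),\gamma(t_k))$), and it is bounded: by the reverse triangle inequality again, $\tau(x, y) + \tau(y, \gamma(t_k)) \leq \tau(x, \gamma(t_k)) \leq B$, so $s_k \leq B - \tau(x,y) < \infty$. Hence $s_k$ converges, say to $s_\infty$, and is Cauchy. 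Now for $j < k$ we have $\gamma(t_j) \ll \gamma(t_k)$ (using that $y \ll \gamma(t_k)$ for $k$ large — which holds since $x \ll y$ forces, via pushup with $\tau(x,\gamma(t_k))>0$, eventually... actually one gets $y \ll \gamma(t)$ for $t$ large because $\tau(x,\gamma(t))>0$ and one can interpose; more carefully, since $\gamma$ is inextendible and $x \ll y = \gamma(0)$, for $t$ near $c$ we have $\gamma(t) \in I^+(y)$, as $\tau(y, \gamma(t)) = s$ eventually becomes positive once $\gamma$ has genuinely progressed — I would need to verify $s_k > 0$ for $k$ large, which follows because if $\tau(y,\gamma(t)) = 0$ for all $t$ then $L_\tau(\gamma|_{[0,t]}) = 0$, contradicting... hmm, this needs care). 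Granting $\gamma(t_j) \ll \gamma(t_k)$ for $j < k$ large, the reverse triangle inequality gives $\tau(\gamma(t_j), \gamma(t_k)) \leq s_k - s_j \leq s_\infty - s_j \rightleftharpoons B_j$, and $B_j \to 0$. So $(\gamma(t_k))_k$ (reindexed from some large starting point) is exactly a timelike Cauchy sequence in the sense of Definition \ref{def:timelike}.

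By timelike Cauchy completeness, $\gamma(t_k) \to z_\infty$ for some $z_\infty \in X$. The final step is to derive a contradiction with future inextendibility of $\gamma$. The natural route is a limit curve argument: the curves $\gamma|_{[0, t_k]}$ all lie in the causal diamond $J(y, z_\infty')$ for a suitable $z_\infty' \gg z_\infty$, which is compact by global hyperbolicity; by the limit curve theorem for Lorentzian length spaces (available in the globally hyperbolic, non-totally imprisoning setting via the uniform $d$-arclength bound in Definition \ref{non-totally}) one obtains a causal curve $\hat\gamma \colon [0, c] \to X$ extending $\gamma$, with $\hat\gamma(c) = z_\infty$ — contradicting future inextendibility of $\gamma$. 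The past-directed case is symmetric. The main obstacle, and the step I expect to require the most care, is justifying that $\gamma$ actually extends to $z_\infty$: one must show the limit curve passes through $z_\infty$ and agrees with $\gamma$ on $[0,c)$, which uses the closedness of $J^\pm$ (Fact \ref{closed}) together with monotonicity of $\tau(y, \gamma(t))$; and one must secure the preliminary facts that $s_k > 0$ eventually and $\gamma(t_j) \ll \gamma(t_k)$, which is where the geodesic (local maximality) hypothesis is genuinely needed rather than just causality.
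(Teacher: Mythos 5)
There is a genuine gap at the heart of your plan. You want the points $\gamma(t_k)$ on the geodesic itself to form the timelike Cauchy sequence, which requires $\gamma(t_j)\ll\gamma(t_k)$ for $j<k$. But Condition A quantifies over all future inextendible \emph{causal} geodesics, including null ones, and for a null (or achronal) geodesic one has $\gamma(t_j)\not\ll\gamma(t_k)$ and $\tau(y,\gamma(t))=0$ for all $t$; so your sequence simply is not a timelike Cauchy sequence in the sense of Definition \ref{def:timelike}, and timelike Cauchy completeness says nothing about it. You flagged this yourself, but the proposed repair does not work: the claim ``if $\tau(y,\gamma(t))=0$ for all $t$ then $L_\tau(\gamma|_{[0,t]})=0$, a contradiction'' is not a contradiction at all, since a causal curve in a Lorentzian pre-length space is only required to satisfy $\gamma(t_1)\le\gamma(t_2)$ and may well have vanishing $\tau$-length (null geodesics do). Nor does $x\ll y\le\gamma(t)$ help here: pushup gives $x\ll\gamma(t)$, but chronology between \emph{distinct points of $\gamma$} does not follow.

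The paper's proof circumvents exactly this obstruction by not using the points $\gamma(t_n)$ themselves. Instead, for each $n$ it takes a maximal causal curve $G_n$ from $x$ to $\gamma(t_n)$ (which exists by Fact \ref{geodesic}, since $x\ll\gamma(t_n)$), and chooses $z_n$ on $G_n$ with $d(z_n,\gamma(t_n))<1/n$; continuity of $\tau$ together with $\tau(x,\gamma(t_n))>0$ forces the existence of such $z_n$ with $z_n\ll\gamma(t_n)$, and a similar argument yields $z_n\ll z_{n+1}$. The chronology of the auxiliary sequence is thus inherited from $x\ll\gamma(t_n)$, not from any chronology along $\gamma$. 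Your monotonicity estimate $\tau(z_j,z_k)\le\tau(x,z_k)-\tau(x,z_j)\to 0$ then goes through verbatim for this sequence (the paper uses $\tau(x,\cdot)$ rather than $\tau(y,\cdot)$, which is immaterial), and since $d(z_n,\gamma(t_n))\to 0$ the limit of $(z_n)_n$ is also a limit of $(\gamma(t_n))_n$, contradicting future inextendibility. Your final limit-curve step is a reasonable alternative way to close the argument, but without the interposed points $z_n$ the construction of a timelike Cauchy sequence---the step that actually invokes the hypothesis---does not get off the ground for null geodesics.
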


\begin{proof}
Let $x, y \in X$ with $x \ll y$ and let $\gamma(t)$ be a future inextendible, future-directed causal geodesic
starting at $y$, defined on $[0,c)$. We argue by contradiction: if Condition A fails,
we construct a sequence as in Definition \ref{def:timelike} that forces $\gamma$ to be extendible.

Suppose that there exists $B > 0$ such that
$\tau(x, \gamma(t)) \leq B$ for all $t \in [0, c).$
Let $(t_n)_n$ be an increasing sequence of positive numbers such that $t_n \to c$ as $n \to \infty.$
Then it follows from Fact \ref{pushup} and Fact \ref{geodesic} that
there exist positive numbers $(c_n)_n$ and maximal future-directed causal curves $(G_n(s))_n$
defined on $[0, c_n]$ such that
$G_n(0) = x$ and $G_n(c_n) = \gamma(t_n)$. 

We will construct a timelike Cauchy sequence $(z_n)_n.$
By the continuity of $d$, we can take $z_1 \in G_1((0, c_1))$ satisfying $d(z_1, \gamma(t_1)) < 1.$
We will show that if $z_n$ is taken,
we can take $z_{n+1}$ in $G_{n+1}((0, c_{n+1}))$
satisfying $d(z_{n+1}, \gamma(t_{n+1})) < \frac{1}{n+1}$ and $z_n \ll z_{n+1}$. 
We only need to show that $z_{n+1}$ can be taken satisfying $z_n \ll z_{n+1}$. 

First, we show that $z_n$ can be taken satisfying $z_n \ll \gamma(t_n).$
Suppose that $G_n(s) \not\ll \gamma(t_n)$ for all $s \in (0, c_n).$
Then, by Fact \ref{finite} (i) and Definition \ref{LLS} (iii),
we have $\tau(G_n(s), \gamma(t_n)) = 0$ for all $s \in (0, c_n).$
By Fact \ref{cont} (ii), we obtain
\begin{align}
0 = \lim_{s \to +0} \tau(G_n(s), \gamma(t_n)) = \tau(x, \gamma(t_n)).
\end{align}
Thus, we see that $x \not\ll \gamma(t_n).$
However, since $x \ll y \leq \gamma(t_n), $
by Fact \ref{pushup},
we obtain $x \ll \gamma(t_n)$, a contradiction.
Thus we can take $z_n$ satisfying $z_n \ll \gamma(t_n).$

Next, we show that $z_{n+1}$ can be taken satisfying $z_n \ll z_{n+1}$. 
Suppose that $z_n \not\ll G_{n+1}(s)$ for all $s \in (0, c_{n+1}).$
Then, by Fact \ref{finite} (i) and Definition \ref{LLS} (iii),
we have
$\tau(z_n, G_{n+1}(s)) = 0$ for all $s \in (0, c_{n+1}).$
Thus, by Definition \ref{LLS} (ii) and Fact \ref{cont} (ii), we obtain
\begin{align}
0 &\leq \tau(z_n, \gamma(t_n)) + \tau(\gamma(t_n), \gamma(t_{n+1})) \\
&\leq \tau(z_n, \gamma(t_{n+1})) = \lim_{s \to c_{n+1}}\tau(z_n, G_{n+1}(s)) = 0,
\end{align}
which implies $\tau(z_n, \gamma(t_n)) = 0.$ This contradicts $z_n \ll \gamma(t_n)$ 
and thus we can take $z_{n+1}$ satisfying $z_n \ll z_{n+1}$. 

Since $x \ll z_n \ll z_{n+1}$, we see that the sequence $(\tau(x, z_n))_n$ is increasing.
By the assumption of contradiction that $(\tau(x, z_n))_n$ is bounded above by $B$,
it follows that $(\tau(x, z_n))_n$ converges to some non-negative constant $b.$
We obtain
\begin{align*}
\tau(z_n, z_{n+m}) &\leq \tau(x, z_{n+m}) - \tau(x, z_n)\\
&\leq b - \tau(x, z_n) \to 0\ (n \to \infty).
\end{align*}
Thus, by the timelike Cauchy completeness, the sequence $(z_n)_n$ converges
to some $z_\infty \in X.$
Since $d(z_n, \gamma(t_n)) \leq \frac{1}{n} \to 0$ as $n \to \infty, $
we see that $\gamma(t_n) \to z_\infty.$
This contradicts the future inextendibility of $\gamma$.
\end{proof}

The following theorem is obtained by combining 
Theorem \ref{timelikeCauchycomplete} and Theorem \ref{conditionA}.

\begin{theorem}\label{thm:LLS}
Let $(X, \ll, \leq, d, \tau)$ be a globally hyperbolic Lorentzian length space.
\begin{enumerate}
\item If $(X, \ll, \leq, d, \tau)$ is finitely compact, then $(X, \ll, \leq, d, \tau)$ is timelike Cauchy complete. 
\item If $(X, \ll, \leq, d, \tau)$ is timelike Cauchy complete, then $(X, \ll, \leq, d, \tau)$ satisfies Condition A.
\end{enumerate}
\end{theorem}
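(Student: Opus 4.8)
The plan is to obtain Theorem \ref{thm:LLS} by directly invoking the two results already proved in this section. Concretely, statement (1) of Theorem \ref{thm:LLS} is verbatim Theorem \ref{timelikeCauchycomplete}, and statement (2) is verbatim Theorem \ref{conditionA}; both have as their standing hypothesis that $(X, \ll, \leq, d, \tau)$ is a globally hyperbolic Lorentzian length space, which is exactly the hypothesis of Theorem \ref{thm:LLS}. So the proof is essentially a one-line citation of each.

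First I would dispatch part (1): assuming $(X, \ll, \leq, d, \tau)$ is finitely compact, apply Theorem \ref{timelikeCauchycomplete} to conclude it is timelike Cauchy complete. Then I would dispatch part (2): assuming $(X, \ll, \leq, d, \tau)$ is timelike Cauchy complete, apply Theorem \ref{conditionA} to conclude it satisfies Condition A. No auxiliary constructions are needed, since the heavy lifting --- the compactness/Cantor-intersection argument for part (1), and the timelike-Cauchy-sequence construction forcing extendibility of the geodesic in part (2) --- has already been carried out in the proofs of those two theorems.

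Since this is a purely bookkeeping assembly step, there is no real obstacle; the only thing to be careful about is that the hypotheses line up, namely that global hyperbolicity (non-totally imprisoning plus compact causal diamonds, as in Definition \ref{causal}) is common to all three statements, so that the intermediate conclusions in each implication satisfy the hypotheses needed. Here is the proof.

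\begin{proof}
Part (1) is precisely Theorem \ref{timelikeCauchycomplete}: if the globally hyperbolic Lorentzian length space $(X, \ll, \leq, d, \tau)$ is finitely compact, then it is timelike Cauchy complete. Part (2) is precisely Theorem \ref{conditionA}: if the globally hyperbolic Lorentzian length space $(X, \ll, \leq, d, \tau)$ is timelike Cauchy complete, then it satisfies Condition A.
\end{proof}
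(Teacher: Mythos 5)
Your proposal is correct and matches the paper exactly: the paper itself states that Theorem \ref{thm:LLS} is obtained by combining Theorem \ref{timelikeCauchycomplete} and Theorem \ref{conditionA}, with no additional argument. The hypotheses line up as you note, so the one-line citation of each result suffices.
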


As shown above, in globally hyperbolic Lorentzian length spaces, 
finite compactness implies timelike Cauchy completeness, 
and timelike Cauchy completeness implies Condition A.  

However, to derive finite compactness from Condition A, 
one needs to analyze in detail the behavior of geodesics carefully.  
For instance, there exists a spacetime with a $C^1$ Lorentzian metric, 
which is an example of a Lorentzian length space, 
in which geodesic uniqueness fails \cite[Example~3.2]{KOSS22}. 
In what follows, we focus on the particular class of Lorentzian length spaces, 
namely $C^1$-spacetimes, and examine additional assumptions 
under which finite compactness can be recovered from Condition A.

\section{Geodesics in $C^1$-spacetimes}

In this section, we investigate additional assumptions under 
which finite compactness can be derived from Condition A in $C^1$-spacetimes, 
which form a particular instance of Lorentzian length spaces.

To this end, we first analyze the properties of geodesics in $C^1$-spacetimes.
In a $C^1$-spacetime, the Christoffel symbols are continuous, 
and hence geodesics can be defined 
in the same way as in the smooth case (cf.~\cite[Definition~2.1]{G20}).

\begin{definition}
    Let $(M, g)$ be an $n$-dimensional $C^1$-spacetime. 
    A $C^2$-curve $\gamma \colon I \to M$ is called a \emph{geodesic} if it satisfies 
    $\nabla_{\dot{\gamma}}\dot{\gamma} = 0$, i.e., the geodesic equation
    \begin{equation}
        \frac{d^2\gamma^k}{dt^2} + 
        \sum_{i, j=0}^{n-1} \Gamma_{ij}^k \frac{d\gamma^i}{dt} \frac{d\gamma^j}{dt} = 0,
        \qquad (k=0, \dots, n-1).
    \end{equation}
    Here $\Gamma^k_{ij}$ denote the Christoffel symbols,
    \begin{equation}
        \Gamma^k_{ij} = \frac{1}{2} \sum_{l=0}^{n-1}g^{kl}
        \bigl(\partial_i g_{jl} + \partial_j g_{il} - \partial_l g_{ij} \bigr),
    \end{equation}
    which are continuous functions since the metric coefficients $g_{ij}$ are $C^1$.
\end{definition}

The following still holds for geodesics even in $C^1$-spacetimes. 

\begin{fact}[{\cite[Corollary 2.4]{G20}}]\label{fact:causality}
    In a $C^1$-spacetime, any geodesic does not change 
    the causality. 
\end{fact}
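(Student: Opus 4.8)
The plan is to prove the stronger assertion that the scalar $f(t)\coloneqq g_{\gamma(t)}\bigl(\dot\gamma(t),\dot\gamma(t)\bigr)$ is constant along every geodesic $\gamma$; since $g$ is Lorentzian, the three cases $f<0$, $f=0$, $f>0$ correspond exactly to $\gamma$ being timelike, null, or spacelike, so constancy of $f$ forbids any change of causal character. The first point to settle is that $f$ is genuinely differentiable with the naively expected derivative, and here the $C^1$-hypothesis enters. By definition a geodesic is a $C^2$-curve, hence $\dot\gamma$ is $C^1$; as the metric coefficients $g_{ij}$ are $C^1$, the composition $t\mapsto g_{ij}(\gamma(t))$ is $C^1$ with derivative $\partial_k g_{ij}(\gamma(t))\,\dot\gamma^k(t)$. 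Consequently, in any chart, $f(t)=g_{ij}(\gamma(t))\,\dot\gamma^i(t)\,\dot\gamma^j(t)$ is a product of $C^1$ functions, so the classical product and chain rules apply and $f\in C^1$.

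Next I would differentiate $f$ in a coordinate chart. Using the symmetry $g_{ij}=g_{ji}$, this gives $f'(t)=(\partial_k g_{ij})\,\dot\gamma^k\dot\gamma^i\dot\gamma^j+2g_{ij}\,\ddot\gamma^i\dot\gamma^j$, and I then substitute the geodesic equation $\ddot\gamma^i=-\sum_{a,b}\Gamma^i_{ab}\dot\gamma^a\dot\gamma^b$. Inserting the explicit formula for $\Gamma^i_{ab}$ yields $g_{ij}\Gamma^i_{ab}=\tfrac12(\partial_a g_{bj}+\partial_b g_{aj}-\partial_j g_{ab})$, and since every free index is contracted against three copies of $\dot\gamma$, each of the three resulting terms equals — after relabeling and using $g_{ij}=g_{ji}$ — the single contracted quantity $S\coloneqq(\partial_k g_{ij})\,\dot\gamma^i\dot\gamma^j\dot\gamma^k$. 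Thus the $\ddot\gamma$-contribution is $-(S+S-S)=-S$, which cancels the first term $S$, so $f'(t)=0$. (Conceptually this is just the pointwise identity $\nabla g=0$, which remains valid here because a $C^1$ metric has continuous Christoffel symbols satisfying the usual metric-compatibility relations.)

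Hence $f$ is constant on $I$, which proves the statement. I do not expect a substantive obstacle: the computation is the familiar one from the smooth case, and the only thing that genuinely requires care is the regularity bookkeeping — namely, checking that $\gamma\in C^2$ together with $g\in C^1$ is precisely enough to make every step licit (the chain rule for $g_{ij}\circ\gamma$, the existence of the continuous function $\ddot\gamma$, and the substitution of the geodesic equation). This is also what delimits the scope of the result: below $C^1$ regularity the Christoffel symbols need no longer be continuous and $C^2$ geodesics in this classical sense need not exist, so the argument — and the conclusion — is confined to $C^1$-spacetimes.
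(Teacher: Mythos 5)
Your proof is correct: since $g\in C^1$ and a geodesic is by definition $C^2$, every term in the computation of $\tfrac{d}{dt}\,g_{\gamma(t)}(\dot\gamma(t),\dot\gamma(t))$ is continuous, the substitution of the geodesic equation is licit, and the cancellation yields constancy of $g(\dot\gamma,\dot\gamma)$, which is exactly the assertion that the causal character cannot change. The paper imports this statement as a Fact from the cited reference without reproving it, and the argument there is the same metric-compatibility computation you carry out, so your approach matches the intended one.
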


In $C^1$-spacetime, or more generally $C^0$-spacetime, 
the curve length and Lorentz distance are defined in the usual way.

\begin{definition}[{\cite[Definition 4.1]{BEE96}}]\label{def:Lorentzian-distance}
Let $(M,g)$ be a $C^0$-spacetime and let $p,q\in M$. 
A future-directed causal curve $\gamma\colon[a,b]\to M$ means 
a locally Lipschitz curve whose tangent vector is future-directed and causal almost everywhere. 
Its $g$-length is
\begin{equation}
L_g(\gamma)\coloneqq \int_a^b \sqrt{-\,g(\dot\gamma(t),\dot\gamma(t))}\,dt,
\end{equation}
which is well-defined and independent of the reparametrization.  
Let $\Omega_{p,q}$ be the set of all future-directed causal curves from $p$ to $q$.  
The \emph{Lorentzian distance} is defined by 
\begin{equation}
T(p,q)\coloneqq
\begin{cases}
\displaystyle \sup_{\gamma\in\Omega_{p,q}} L_g(\gamma), & \Omega_{p,q}\neq\emptyset,\\
0, & \Omega_{p,q}=\emptyset.
\end{cases}
\end{equation}
\end{definition}

\begin{definition}
Let $(M,g)$ be a $C^1$-spacetime. 
A future-directed causal curve $\gamma\colon[a,b]\to M$ is called \emph{maximal} if
\[
L_g(\gamma)\ \ge\ L_g(\lambda)
\]
for every future-directed causal curve $\lambda$ with the same endpoints as $\gamma$.
\end{definition}

In a $C^1$-spacetime, the geodesic equation is merely continuous,   
so the geodesic ODE has local solutions, but uniqueness may fail.
Indeed, \cite[Example 3.2]{KOSS22} exhibits a 
$C^1$-spacetime in which maximal timelike geodesics and null geodesics branch. 
Consequently, to guarantee the uniqueness of causal geodesics, 
we impose causally non-branching which is a stronger condition 
than maximally causally non-branching (cf.~\cite[Definition 3.1]{KOSS22}). 
We also show that this non-branching hypothesis is actually satisfied by a 
class of $C^1$ low-regularity spacetimes treated in 
Appendix~A; see Theorem~\ref{prop:CNB-non-intertwining}.

\begin{definition}[cf.~{\cite[Definition 3.1]{KOSS22}}]\label{def:non-branching}
    Let $(M, g)$ be a $C^1$-spacetime. 
    A geodesic $\gamma \colon [a, b] \to M$ is said to 
    \emph{branch} at $t_0 \in (a, b)$ 
    if there exists $\epsilon > 0$ and a geodesic 
    $\sigma \colon (t_0-\epsilon, t_0+\epsilon) \to M$ such that 
    $\gamma|_{(t_0-\epsilon, t_0]} = \sigma|_{(t_0-\epsilon, t_0]}$ and 
    $\gamma|_{(t_0, t_0 + \epsilon)} \cap \sigma|_{(t_0, t_0 + \epsilon]} = \emptyset$. 
    $(M, g)$ is called \emph{maximally causally non-branching (MCNB)} if 
    no maximal causal geodesic branches in this sense. 
    $(M, g)$ is called \emph{causally non-branching} if 
    no causal geodesic branches in this sense.
\end{definition}

We define the following sufficient condition ensuring 
that causal geodesics do not branch. 

\begin{definition}\label{def:non-intertwining}
Let $(M,g)$ be a $C^1$-spacetime and $p \in M$. 
We say that $(M, g)$ is \emph{non-intertwining at $p$} if there exists a neighborhood $U$ of $p$ 
such that, for any $\varepsilon > 0$ and any causal geodesics 
$\gamma_1 \colon [0, \varepsilon] \to U$ and 
$\gamma_2 \colon [0, \varepsilon] \to U$ starting at $p$ with the same initial velocity $v$,  
it holds that $\gamma_1(t) = \gamma_2(t)$ for all $t \in [0, \varepsilon]$ or 
$\gamma_1((0, \varepsilon]) \cap \gamma_2((0, \varepsilon])$ has no accumulation point.  
We say $(M,g)$ is \emph{non-intertwining} if it is non-intertwining at every $p\in M$. 
\end{definition}

The causally non-branching and non-intertwining conditions ensure that 
any causal geodesic with the same initial condition is unique. 

\begin{lemma}\label{lem:uniq-from-proper}
Let $(M,g)$ be a causally non-branching and non-intertwining $C^1$-spacetime.  
Suppose $\gamma_1,\gamma_2 \colon [0,1]\to M$ are causal geodesics with 
the same initial condition: 
\[
\gamma_1(0)=\gamma_2(0)=p,\qquad \dot\gamma_1(0)=\dot\gamma_2(0)=v.
\]
Then $\gamma_1=\gamma_2$ on $[0,1]$.
\end{lemma}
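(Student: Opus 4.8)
The plan is a connectedness argument on the set of parameters at which the two geodesics agree. Set $A\coloneqq\{t\in[0,1]:\gamma_1=\gamma_2\text{ on }[0,t]\}$. Then $0\in A$, and $A$ is an initial subinterval of $[0,1]$; since $\gamma_1,\gamma_2$ are $C^2$, agreement of positions on a half-open interval $[0,t^*)$ forces, passing to the limit, agreement of positions \emph{and} velocities at $t^*$, so $A$ is closed, say $A=[0,t^*]$ with $t^*=\sup A$. It therefore suffices to prove $t^*=1$, and I argue by contradiction, assuming $t^*<1$.

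Put $q\coloneqq\gamma_1(t^*)=\gamma_2(t^*)$ and $w\coloneqq\dot\gamma_1(t^*)=\dot\gamma_2(t^*)$; because the $\gamma_i$ are causal, $w$ is causal and in particular $w\neq 0$ (Fact \ref{fact:causality}), and the reparametrized curves $\sigma_i(s)\coloneqq\gamma_i(t^*+s)$ are causal geodesics issuing from $q$ with common initial velocity $w$. Choosing the non-intertwining neighborhood $U$ of $q$ (Definition \ref{def:non-intertwining}) and $\varepsilon\in(0,1-t^*)$ small enough that $\sigma_1([0,\varepsilon])\cup\sigma_2([0,\varepsilon])\subset U$, non-intertwining yields the dichotomy: either $\sigma_1\equiv\sigma_2$ on $[0,\varepsilon]$, or $S\coloneqq\sigma_1((0,\varepsilon])\cap\sigma_2((0,\varepsilon])$ has no accumulation point in $M$. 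To exclude the second alternative I invoke causal non-branching. Assume first $t^*>0$, so that $t^*$ is an interior parameter of the causal geodesic $\gamma_1$ and $\gamma_1=\gamma_2$ on $(t^*-\epsilon,t^*]$ for all small $\epsilon$. If $\gamma_1((t^*,t^*+\epsilon))\cap\gamma_2((t^*,t^*+\epsilon])=\emptyset$ for some $\epsilon>0$, then $\gamma_1$ would branch at $t^*$ with witness $\gamma_2$ in the sense of Definition \ref{def:non-branching}, a contradiction; hence this intersection is nonempty for all small $\epsilon>0$. Picking $\epsilon_n\downarrow 0$ with $\epsilon_n\leq\varepsilon$ and a point $p_n$ in each corresponding intersection, we get $p_n\in S$, $p_n\to q$, and $p_n\neq q$ for large $n$ (because $\dot\sigma_1(0)=w\neq 0$ makes $\sigma_1$ injective near $0$), so $q$ is an accumulation point of $S$, contradicting the second alternative. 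Therefore $\sigma_1\equiv\sigma_2$ on $[0,\varepsilon]$, i.e.\ $\gamma_1=\gamma_2$ on $[t^*,t^*+\varepsilon]$, so $t^*+\varepsilon\in A$, contradicting $t^*=\sup A$. The remaining case $t^*=0$ is handled the same way after extending $\gamma_1$ slightly to the past: concatenating a backward geodesic having the correct $1$-jet at $0$ (which exists by Peano's theorem) with $\gamma_1$ produces a causal geodesic $\bar\gamma_1$ on $(-\eta,1]$---a $C^1$-matched concatenation of geodesics is again a $C^2$ geodesic, and causality is preserved by Fact \ref{fact:causality}---so that $0$ becomes an interior parameter and the concatenation of the backward half with $\gamma_2$ serves as the branching witness.

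I expect the main obstacle to be the correct interlocking of the two hypotheses rather than any single computation: causal non-branching only guarantees that the forward halves of the two geodesics keep meeting arbitrarily close to $q$, while non-intertwining supplies the rigidity needed to upgrade ``meeting near $q$'' to ``coinciding near $q$''. The delicate points are that ``accumulation point'' must be read in $M$, so that the limit $q$---which need not lie in $S$---delivers the contradiction; that the common points $p_n$ must be certified genuinely distinct from $q$, which is exactly where causality of $w$ (hence $w\neq 0$, via Fact \ref{fact:causality}) is used; and that the endpoint case $t^*=0$ is not an interior parameter, so the non-branching definition only becomes applicable after the backward-extension-and-concatenation device. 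Once these are handled, the connectedness argument closes and gives $t^*=1$, i.e.\ $\gamma_1=\gamma_2$ on $[0,1]$.
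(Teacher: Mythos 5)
Your proof is correct and follows essentially the same route as the paper's: the local step combines the non-intertwining dichotomy with causal non-branching (using the past-extension-and-concatenation device to make the starting parameter an interior point), and a connectedness argument globalizes the local coincidence. The differences are presentational --- you run the contradiction in the contrapositive direction (non-branching forces the forward images to keep meeting arbitrarily close to $q$, so the ``no accumulation point'' alternative of non-intertwining fails), and you carry out explicitly the connectedness step that the paper leaves as standard.
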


\begin{proof}
First, we show that $\gamma_1$ and $\gamma_2$ coincide on 
a small interval $[0, t_1]$ where $t_1 > 0$. 
Suppose, for the sake of contradiction, that $\gamma_1$ and $\gamma_2$ 
do not coincide on any neighborhood of $0$. 
The non-intertwining condition implies that there exists $\varepsilon > 0$ such that 
$\gamma_1((0, \varepsilon]) \cap \gamma_2((0, \varepsilon])$ has no accumulation point. 
This guarantees the existence of a possibly smaller $\eta \in (0, \varepsilon]$ such that 
$\gamma_1(t) \neq \gamma_2(t)$ for all $t \in (0, \eta]$.

Let $\gamma_- \colon (-\delta, 0] \to M$ be a past-directed causal geodesic 
with $\gamma_-(0) = p$ and $\dot\gamma_-(0) = v$, 
obtained from local existence for the geodesic ODE. 
Define concatenations
\begin{equation}
\tilde\gamma_i(t)=
\begin{cases}
\gamma_-(t), & t\in(-\delta,0],\\
\gamma_i(t), & t\in[0,\eta],
\end{cases}\qquad i=1,2.
\end{equation}
Each $\tilde\gamma_i$ is a causal geodesic on $(-\delta,\eta]$, the two coincide on $(-\delta,0]$, 
but they differ on $(0, \eta]$. 
Thus, we have a causal geodesic that branches at the interior point $t=0$, 
contradicting causal non-branching. 

Therefore, the assumption is false, and $\gamma_1$ and $\gamma_2$ must coincide on 
some small interval. By a standard connectedness argument, this local uniqueness extends to the whole interval $[0, 1]$. 
\end{proof}

Since the causal geodesic with a given initial condition is unique in the above situation, 
we can define the causal exponential map as follows.

\begin{definition}\label{def:exp}
    Let $(M, g)$ be a causally non-branching and non-intertwining $C^1$-spacetime and 
    let $p \in M$. 
    Then, we can define the \emph{domain of the causal exponential map} 
    at $p$ by 
    \begin{equation}
        \mathcal{E}_p \coloneqq 
        \{v \in T_pM ; g_p(v, v) \le 0, \text{$\gamma_v$ is defined on an interval containing $[0,1]$}\}
    \end{equation}
    and the \emph{causal exponential map} $\exp_p \colon \mathcal{E}_p \to M$ at $p$ by 
    \begin{equation}
        \exp_p(v) \coloneqq \gamma_v(1)
    \end{equation}
    where $\gamma_v$ is the causal geodesic with 
    $\gamma(0) = p$ and $\dot{\gamma}(0) = v$. 
\end{definition}

It is worth noting that for $C^{1,1}$-spacetimes, the exponential map is 
a bi-Lipschitz homeomorphism locally around any point \cite[Theorem 2.1]{KSS14}. 
On the other hand, for the $C^1$-spacetimes considered here, 
we establish the continuity of the causal exponential map.
This follows from standard ODE theory, and the proof proceeds 
along the lines of \cite[Corollary 2.6]{KOSS22}, adapted to the present context. 

\begin{proposition}\label{prop:continuous}
Let $(M, g)$ be a causally non-branching, non-intertwining $C^1$-spacetime. 
Then the causal exponential map $\exp_p \colon \mathcal{E}_p \to M$ at $p$ is 
continuous for every $p \in M$. 
\end{proposition}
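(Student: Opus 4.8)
The plan is to prove sequential continuity: since $\mathcal{E}_p$ is a subset of the finite-dimensional vector space $T_pM$, it suffices to fix a sequence $v_n \to v$ in $\mathcal{E}_p$ and show $\exp_p(v_n) = \gamma_{v_n}(1) \to \gamma_v(1) = \exp_p(v)$. In a $C^1$-spacetime the geodesic equation is a first-order ODE on $TM$ whose right-hand side (the geodesic spray) is continuous, the Christoffel symbols being continuous; the strategy is to run the classical compactness argument for such ODEs, with the required uniqueness supplied by Lemma \ref{lem:uniq-from-proper}, exactly as in \cite[Corollary 2.6]{KOSS22}.

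First I would fix a compact set $\mathcal{K} \subset TM$ in which to work. Because $v \in \mathcal{E}_p$, the geodesic $\gamma_v$ is defined on an interval containing $[0,1]$, and since it solves a continuous ODE and is $C^2$ up to $t=1$, Peano's extension lets us assume $\gamma_v$ is defined on $[0,1+\delta]$ for some $\delta>0$. Its velocity curve $t \mapsto (\gamma_v(t),\dot\gamma_v(t))$ then has compact image in $TM$; choose $\mathcal{K}$ to be a compact neighborhood of this image on which the spray is bounded, say by $C$. Now $v$ is causal since $g_p(v,v)\le 0$, so by Fact \ref{fact:causality} every geodesic with initial velocity $v$ is causal, and hence Lemma \ref{lem:uniq-from-proper} makes the forward solution of the geodesic ODE through $(p,v)$ unique on $[0,1]$. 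The classical continuation principle for ODEs with continuous right-hand side and a unique reference solution then yields an $N$ such that for all $n \ge N$ the velocity curve of $\gamma_{v_n}$ remains in $\mathcal{K}$ for $t\in[0,1]$; in particular each such $\gamma_{v_n}$ is defined on all of $[0,1]$, and there the $\dot\gamma_{v_n}$ are bounded by $C$ and, since $\ddot\gamma_{v_n}$ is controlled by the spray on $\mathcal{K}$, equi-Lipschitz.

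Next I would pass to the limit. By the Arzel\`a--Ascoli theorem the family $\{\gamma_{v_n}\}_{n\ge N}$ is precompact in $C^1([0,1],M)$, so any subsequence has a further subsequence converging in $C^1$ to some curve $\sigma$. Because the spray is continuous and the convergence is uniform, one may pass to the limit in the integral form of the geodesic equation to conclude that $\sigma$ is a geodesic with $\sigma(0)=p$ and $\dot\sigma(0)=v$. Since $v$ is causal, $\sigma$ is a causal geodesic by Fact \ref{fact:causality}, whence $\sigma = \gamma_v$ on $[0,1]$ by Lemma \ref{lem:uniq-from-proper}. As every subsequence of $(\gamma_{v_n})_n$ thus admits a further subsequence converging in $C^1([0,1],M)$ to the single curve $\gamma_v$, the whole sequence converges to $\gamma_v$; evaluating at $t=1$ yields $\exp_p(v_n)\to\exp_p(v)$, proving continuity of $\exp_p$ at the arbitrary point $v$.

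The step I expect to require the most care is the \emph{no-escape} claim of the second paragraph---that $\gamma_{v_n}$ really is defined on all of $[0,1]$ for $n$ large---because for merely continuous, non-Lipschitz ODEs continuous dependence on initial data can fail outright. What rescues it here is precisely that the reference geodesic $\gamma_v$ is unique (Lemma \ref{lem:uniq-from-proper}, available because $v$ is causal) and extends slightly past $t=1$, which together allow the nearby geodesics to be trapped in the compact set $\mathcal{K}$ over the whole interval $[0,1]$.
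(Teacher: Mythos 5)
Your proposal is correct and follows essentially the same route as the paper: pass to the first-order ODE with continuous right-hand side, extract $C^1$-convergent subsequences of the geodesics $\gamma_{v_n}$, identify every subsequential limit with $\gamma_v$ via the uniqueness supplied by Lemma~\ref{lem:uniq-from-proper}, and conclude whole-sequence convergence. The only difference is cosmetic: the paper outsources the compactness/limit-solution step to \cite[Proposition 2.5]{KOSS22}, whereas you spell out the Arzel\`a--Ascoli and no-escape argument explicitly.
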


\begin{proof}
It suffices to consider the case where the geodesic is contained 
in a single coordinate chart. The result extends to the general case 
by a finite covering argument, noting that the convergence of position and velocity 
at the end of one chart implies the convergence of initial data for the next. 

Let $\{v_k\}_{k \in \N} \subset \mathcal{E}_p$ be a sequence 
of causal vectors converging to $v \in \mathcal{E}_p$.
In a local coordinate $(U, \phi)$, the geodesic equation with initial point 
$\tilde{p} = (\tilde{p}^1, \dots, \tilde{p}^n)$ and initial velocity 
$\tilde{v} = (\tilde{v}^1, \dots, \tilde{v}^n)$ 
is rewritten as first order differential equations. 
Let $\tilde{y}(t) = (\tilde{y}^1(t), \dots, \tilde{y}^{2n}(t))^T$. The equation is as follows:
\begin{equation} \label{eq:ode_system} \tag{$*$}
\frac{d}{dt}
\begin{pmatrix}
\tilde{y}^1(t) \\
\vdots \\
\tilde{y}^n(t) \\
\tilde{y}^{n+1}(t) \\
\vdots \\
\tilde{y}^{2n}(t)
\end{pmatrix}
=
\begin{pmatrix}
\tilde{y}^{n+1}(t) \\
\vdots \\
\tilde{y}^{2n}(t) \\
-\Gamma^1_{ij}(\tilde{y}^1, \dots, \tilde{y}^n) \tilde{y}^{n+i} \tilde{y}^{n+j} \\
\vdots \\
-\Gamma^n_{ij}(\tilde{y}^1, \dots, \tilde{y}^n) \tilde{y}^{n+i} \tilde{y}^{n+j}
\end{pmatrix}
\end{equation}
with initial conditions:
\begin{align*}
\tilde{y}^l(0) &= \tilde{p}^l \quad (1 \le l \le n), \\
\tilde{y}^l(0) &= \tilde{v}^{l-n} \quad (n+1 \le l \le 2n).
\end{align*}

Define the function $F$ by the right-hand side of \eqref{eq:ode_system}:
\[
F(t, \tilde{y}^1, \dots, \tilde{y}^{2n}) \coloneqq (\text{RHS of } \eqref{eq:ode_system}).
\]
Then $F$ is continuous since the metric $g$ is $C^1$. 

Let $Y_k$ be the solution of \eqref{eq:ode_system} with the initial conditions corresponding to 
$p = (p^1, \dots, p^n)$ and $v_k = (v_k^1, \dots, v_k^n)$.
Since $v_k \to v$, the initial data converges as $(p, v_k) \to (p, v)$.
By \cite[Proposition 2.5]{KOSS22}, 
for any subsequence of $\{Y_k\}$, there exists a further subsequence 
$\{Y_{k_m}\}$ and a solution $\tilde{Y}$ of \eqref{eq:ode_system} with the initial condition 
corresponding to $(p, v)$ such that
\[
Y_{k_m} \to \tilde{Y} \qquad \text{in } C^1([0, 1], \R^{2n}).
\]

Let $Y$ be the solution of \eqref{eq:ode_system} corresponding to 
$p = (p^1, \dots, p^n)$ and $v = (v^1, \dots, v^n)$. 
By Lemma 4.7, any two causal geodesics with the same initial conditions coincide.  
Therefore, the limit solution $\tilde{Y}$ obtained from the subsequence 
must coincide with the solution $Y$, which defines the 
causal exponential map $\exp_p$.

Since every convergent subsequence converges to the same unique limit $Y$, 
the entire sequence $\{Y_k\}$ converges to $Y$ in $C^1([0, 1], \R^{2n})$.
In particular, evaluating at $t = 1$, the position components converge:
\[
\lim_{k \to \infty} (Y_k^1(1), \dots, Y_k^n(1)) = (Y^1(1), \dots, Y^n(1)).
\]
This means $\lim_{k \to \infty} \exp_p(v_k) = \exp_p(v)$, proving 
the continuity of the exponential map.
\end{proof}

We adopt the definition of global hyperbolicity for $C^1$-spacetimes 
using non-totally imprisoning instead of causality.

\begin{definition}[{\cite[Definition 2.6 and Definition 3.1]{S16}}]
    A $C^1$-spacetime is called \emph{globally hyperbolic} if 
    \begin{enumerate}
        \item non-totally imprisoning, i.e., no future or past inextendible causal curve is 
        contained in a compact set,  
        \item the causal diamond $J(p,q) \coloneqq J^+(p) \cap J^-(q)$ is 
        compact for all $p, q \in M$ .
    \end{enumerate}
\end{definition}

We show that the Lorentzian distance 
is strictly increasing along causal geodesics when one of its arguments is slightly 
shifted away from the starting point.

\begin{lemma}\label{lem:increasing}
    Let $(M, g)$ be a globally hyperbolic $C^1$-spacetime, 
    $p, q \in M$ with $p \ll q$ and 
    $x \colon I \to M$ be a causal geodesic starting at $q$. 
    Then the function $T(p, x(t))$ is strictly monotonically increasing.  
\end{lemma}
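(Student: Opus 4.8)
The plan is to establish weak monotonicity first and then upgrade it to strict monotonicity by a case split on the causal character of $x$, which is constant along $x$ by Fact~\ref{fact:causality}. Throughout I take $x$ to be future-directed with $x(0)=q$ (as is implicit in the statement, the past-directed case being vacuous for the asserted conclusion), and I recall that $(M,\ll,\le,d^h,T)$ is then a globally hyperbolic Lorentzian length space (cf.~Fact~\ref{fact:LSS}), so that Facts~\ref{pushup}, \ref{closed}, \ref{finite}, \ref{geodesic} and Definition~\ref{LLS}(ii) are available, with $\tau=T$ and $L_\tau=L_g$ on causal curves.

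For weak monotonicity, fix $0\le t_1<t_2$ in the domain of $x$. Since $x$ is future-directed causal, $q=x(0)\le x(t_1)\le x(t_2)$; combining $p\ll q$ with $q\le x(t_1)$ via push-up (Fact~\ref{pushup}) yields $p\ll x(t_1)$, so $T(p,x(t_1))>0$ and $p<x(t_1)$. The reverse triangle inequality (Definition~\ref{LLS}(ii)) applied to $p\le x(t_1)\le x(t_2)$ gives
\[
T(p,x(t_2))\ \ge\ T(p,x(t_1))+T(x(t_1),x(t_2))\ \ge\ T(p,x(t_1)).
\]
If $x$ is timelike, then $x(t_1)\ll x(t_2)$ for every $t_1<t_2$, hence $T(x(t_1),x(t_2))>0$ and the displayed inequality is already strict, which proves the lemma in this case.

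It remains to treat the case that $x$ is a null geodesic, and here I would argue by contradiction, supposing $T(p,x(t_1))=T(p,x(t_2))=:c$ for some $t_1<t_2$; note $c>0$. By Fact~\ref{geodesic} pick a maximal future-directed causal curve $\gamma$ from $p$ to $x(t_1)$, so that $L_g(\gamma)=c>0$, and set $\rho:=\gamma\ast x|_{[t_1,t_2]}$, a future-directed causal curve from $p$ to $x(t_2)$. Since $x$ is null, $L_g\bigl(x|_{[t_1,t_2]}\bigr)=0$, so $L_g(\rho)=c=T(p,x(t_2))$ and $\rho$ is maximal. But $\rho$ contains the non-degenerate null segment $x|_{[t_1,t_2]}$ (non-degenerate because $x(t_1)\neq x(t_2)$, $M$ being causal) while also containing the sub-arc $\gamma$ of $g$-length $c>0$; thus $\rho$ is neither everywhere timelike nor everywhere null. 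This contradicts the fact that a maximal causal curve in a $C^1$-spacetime is, up to reparametrization, a geodesic (\cite{G20}) and hence has constant causal character (Fact~\ref{fact:causality}). Therefore $T(p,x(t_1))<T(p,x(t_2))$, completing the proof.

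I expect the last step to be the only genuinely delicate one: it is exactly the statement that $\rho$ cannot have a "corner" at $x(t_1)$ where an incoming timelike direction meets the outgoing null direction $\dot x(t_1)$. If one prefers to avoid quoting \cite{G20}, the same contradiction follows from a local Minkowski comparison: in a chart around $x(t_1)$ on which $g$ is uniformly close to a constant Lorentzian metric $\eta$, replacing a short sub-arc $\gamma|_{[1-\delta,1]}\ast x|_{[t_1,t_1+\delta']}$ of $\rho$ by a near-maximal causal curve joining its endpoints strictly increases the $g$-length — the gain being controlled by $\eta(\dot\gamma(1),\dot x(t_1))<0$ — once more violating the maximality of $\rho$.
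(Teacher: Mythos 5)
Your proposal is correct and follows essentially the same route as the paper: weak monotonicity from the reverse triangle inequality, strictness for timelike $x$ from $T(x(t_1),x(t_2))>0$, and in the null case a contradiction obtained by concatenating a maximizer from $p$ to $x(t_1)$ with the null segment $x|_{[t_1,t_2]}$, which would be a maximal causal curve --- hence a geodesic --- that changes causal character, contradicting Fact~\ref{fact:causality}. The only cosmetic differences are that you invoke Fact~\ref{geodesic} plus positivity of the length where the paper cites \cite[Proposition 2.13]{G20} directly for a timelike maximizing geodesic, and you make explicit the push-up step $p\ll x(t_1)$ that the paper leaves implicit.
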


\begin{proof}
    Let $t_1, t_2 \in I$ with $t_1 < t_2$. 
    The reverse triangle inequality implies that 
    \begin{equation}
        T(p, x(t_1)) + T(x(t_1), x(t_2)) \leq  T(p, x(t_2)). 
    \end{equation} 
    By Fact \ref{fact:causality}, it suffices to consider the case 
    where $x$ is timelike or null. 
    If $x$ is timelike, then it follows that 
    $T(x(t_1), x(t_2)) > 0$, which means that 
    $T(p, x(t))$ is strictly monotonically increasing.

    If $x$ is null, assume there exist $t_1, t_2 \in I$ with $t_1 < t_2$ 
    such that $T(p, x(t_1)) = T(p, x(t_2))$. 
    By \cite[Proposition 2.13]{G20}, there exist timelike geodesics 
    $\gamma_1$ and $\gamma_2$ such that 
    $L_g(\gamma_1) = T(p, x(t_1))$ and $L_g(\gamma_2) = T(p, x(t_2))$. 
    Then the curve $\gamma$ obtained by concatenating $\gamma_1$ and $x|_{[t_1, t_2]}$ 
    is a maximal curve from $p$ to $x(t_2)$. 
    By \cite[Theorem 3.3]{SS21}, 
    $\gamma$ is a causal geodesic, any maximal causal curve is a causal geodesic; 
    hence $\gamma$ is a geodesic which changes causal character along $x|_{[t_1, t_2]}$,  
    contradicting Fact \ref{fact:causality}.  
\end{proof}

Finally, we show that Condition A implies finite compactness for a $C^1$-spacetime.  

\begin{theorem}\label{th:finite-compactness}
    Let $(M, g)$ be a connected, globally hyperbolic, causally non-branching, 
    and non-intertwining $C^1$-spacetime. 
    If $(M, g)$ satisfies Condition A, then $(M, g)$ is finitely compact.  
\end{theorem}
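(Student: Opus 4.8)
The plan is to argue by contradiction: suppose $(M,g)$ satisfies Condition~A but is not finitely compact. By Lemma~\ref{compact} (applied with the Lorentzian length space structure $(M,\ll,\leq,d^h,T)$ furnished by Fact~\ref{fact:LSS}, noting that a $C^1$-spacetime is in particular $C^{0,1}$ and locally causally plain), failure of finite compactness means there exist $B>0$ and $p,q\in M$ with $p\ll q$ such that the set $K_1=\{x\in M ; p\ll q\leq x,\ T(p,x)\leq B\}$ is not compact (or the dual statement for $K_2$, handled symmetrically by time-reversal). Since $K_1$ is closed (by Fact~\ref{closed} and continuity of $T$ from Fact~\ref{cont}), non-compactness gives a sequence $(x_n)_n$ in $K_1$ with no convergent subsequence; in particular $(x_n)_n$ eventually leaves every compact set.

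Next I would connect the $x_n$ to $q$ by maximal causal geodesics. By Fact~\ref{geodesic} there are maximal future-directed causal curves from $q$ to $x_n$, and by \cite[Theorem 3.3]{SS21} (invoked as in the proof of Lemma~\ref{lem:increasing}) each such maximal curve is a causal geodesic; reparametrize it on $[0,1]$ and write it as $\gamma_n(t)=\exp_q(t v_n)$ for some causal $v_n\in\mathcal E_q\subset T_qM$, using the causal exponential map of Definition~\ref{def:exp}, which is well-defined and continuous by Lemma~\ref{lem:uniq-from-proper} and Proposition~\ref{prop:continuous}. Normalize the $v_n$ with respect to the reference Riemannian metric $h$, so that $v_n/\|v_n\|_h$ lies on the (compact) unit sphere in $T_qM$; passing to a subsequence, $v_n/\|v_n\|_h\to w$ for some causal unit vector $w\in T_qM$. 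I claim $\|v_n\|_h\to\infty$: otherwise $(v_n)$ would subconverge in $T_qM$, and continuity of $\exp_q$ would force $x_n=\exp_q(v_n)$ to subconverge in $M$, contradicting our choice of $(x_n)_n$.

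Now consider the geodesic $\sigma(t)=\exp_q(tw)=\gamma_w(t)$ issuing from $q$ in the direction $w$, defined on its maximal interval of existence $[0,c)$; extend it to a future-inextendible future-directed causal geodesic (it is causal and future-directed since $w$ is a causal vector in the future cone, and causal character is preserved by Fact~\ref{fact:causality}). By Condition~A applied to $p\ll q$ and this inextendible geodesic $\sigma$, we have $T(p,\sigma(t))\to\infty$ as $t\to c$. Pick $t_0<c$ with $T(p,\sigma(t_0))>B$. Since $\sigma(t_0)=\exp_q(t_0 w)$ and, for large $n$, $t_0\|v_n\|_h^{-1}\cdot v_n = t_0(v_n/\|v_n\|_h)\to t_0 w$ lies in $\mathcal E_q$, continuity of $\exp_q$ gives $\exp_q\!\bigl(t_0 v_n/\|v_n\|_h\bigr)\to\sigma(t_0)$; but for $n$ large enough $t_0/\|v_n\|_h\le 1$, so $\exp_q\!\bigl(t_0 v_n/\|v_n\|_h\bigr)=\gamma_n(t_0/\|v_n\|_h)$ is a point on the causal geodesic $\gamma_n$ lying between $q$ and $x_n$. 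Hence $q\leq \gamma_n(t_0/\|v_n\|_h)\leq x_n$, and by Lemma~\ref{lem:increasing} (monotonicity of $T(p,\cdot)$ along the causal geodesic $\gamma_n$, which starts at $q$ with $p\ll q$) we get
\begin{equation*}
T\bigl(p,\gamma_n(t_0/\|v_n\|_h)\bigr)\ \leq\ T(p,x_n)\ \leq\ B.
\end{equation*}
Letting $n\to\infty$ and using continuity of $T$ together with $\gamma_n(t_0/\|v_n\|_h)\to\sigma(t_0)$ yields $T(p,\sigma(t_0))\leq B$, contradicting $T(p,\sigma(t_0))>B$. The dual argument with $K_2$ (past-directed geodesics, $x_n\leq q\ll p$) is identical after reversing time. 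Therefore $K_1$ and $K_2$ are compact, and by Lemma~\ref{compact} the spacetime is finitely compact.

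The main obstacle I anticipate is the bookkeeping around the exponential map: ensuring that $v_n$ genuinely lies in $\mathcal E_q$ with the geodesic defined past $t=1$ (so that $x_n=\exp_q(v_n)$ makes sense and $\gamma_n$ is maximal up to $x_n$), that the normalized directions $t_0 v_n/\|v_n\|_h$ also lie in the domain for large $n$, and that the reparametrized maximal geodesic to $x_n$ is indeed an affinely parametrized causal geodesic to which Lemma~\ref{lem:increasing} applies — this last point rests on the maximal-implies-geodesic result of \cite{SS21} and on causal non-branching/non-intertwining guaranteeing the $\gamma_n$ are honest geodesics with well-defined initial data. A secondary subtlety is that Condition~A is stated for inextendible geodesics defined on a half-open interval $[0,c)$; one must check that $\sigma=\gamma_w$, after maximal continuation, is future-inextendible (if it were extendible one could enlarge $c$, contradicting maximality of the domain of existence — or, if it remains in a compact set, non-total imprisoning is violated), so that Condition~A genuinely applies to it.
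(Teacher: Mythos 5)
Your proposal reaches the statement by a genuinely different route from the paper's. The paper argues directly for sequential compactness of $K_1$: it extracts the limit direction $L_0$ of the initial velocities $X_n$ of the maximal geodesics from $q$ to $z_n$, uses Condition~A together with Lemma~\ref{lem:increasing} to locate the unique parameter $t_0$ with $T(p,x(t_0))=b=\lim_n T(p,z_n)$ on the limit geodesic, and then proves $X_n\to X_0$ via a monotonicity-plus-compactness argument for $F(u,s)=T(p,\exp_q(su))$, concluding $z_n\to\exp_q(X_0)\in K_1$. You instead argue by contradiction: an escaping sequence forces the initial velocities to blow up, and the rescaled points $\exp_q(t_0 v_n/\lVert v_n\rVert_h)$ converge to a point $\sigma(t_0)$ with $T(p,\sigma(t_0))>B$, violating the bound $T(p,\cdot)\le B$ propagated backwards along each $\gamma_n$ by Lemma~\ref{lem:increasing}. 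Your version avoids the paper's $G_\pm$ minimum construction and is arguably cleaner; both arguments rest on the same ingredients (Lemma~\ref{compact}, compactness of the set of causal directions at $q$, Condition~A, Lemma~\ref{lem:increasing}, Proposition~\ref{prop:continuous}).

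There is, however, one genuine gap: the claim that $\lVert v_n\rVert_h\to\infty$. You justify it by saying that otherwise $(v_n)$ subconverges to some $v_\infty$ and continuity of $\exp_q$ forces $(x_n)$ to subconverge. But Proposition~\ref{prop:continuous} gives continuity of $\exp_q$ only on $\mathcal{E}_q$, and $\mathcal{E}_q$ is not known to be closed; in a geodesically incomplete spacetime --- precisely the situation Condition~A is meant to detect --- the limit $v_\infty$ may fail to lie in $\mathcal{E}_q$, so $\exp_q(v_\infty)$ is undefined, no contradiction is obtained, and indeed the claim itself can be false along such a subsequence. The repair uses only tools already in your argument: if $v_{n_k}\to v_\infty\notin\mathcal{E}_q$, then $\gamma_{v_\infty}$ is defined on a maximal interval $[0,c)$ with $c\le 1$ and (after the inextendibility check you already flag) Condition~A supplies $t_0<c$ with $T(p,\gamma_{v_\infty}(t_0))>B$; since $t_0<c\le 1$, the vectors $t_0 v_{n_k}$ lie in $\mathcal{E}_q$ and converge to $t_0 v_\infty\in\mathcal{E}_q$, so $\exp_q(t_0 v_{n_k})=\gamma_{n_k}(t_0)$ lies between $q$ and $x_{n_k}$ and the same monotonicity-plus-continuity contradiction applies. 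With this case added (and the trivial subcase $x_n=q$ disposed of, as the paper does), your argument is sound; the remaining caveats you list --- that the limit curve is a geodesic in the sense required by Condition~A and is future inextendible --- are shared with, and no worse than, the paper's own treatment.
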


\begin{proof}
    We prove the result by adapting \cite[Theorem 5]{B76} 
    to our setting. 
    By \cite[Proposition 5.9, Theorem 5.6]{S16}, $(M, g)$ is strongly causal 
    as defined for spacetimes  
    and also causally plain by \cite[Corollary 1.17]{CG12}. 
    Thus, by \cite[Theorem 5.12]{KS18}, 
    the 5-tuple $(M, \ll, \leq , d^h, T)$ defined via locally Lipschitz curves 
    (cf.~\cite[Section 5.1]{KS18}) is a Lorentzian length space.  
    The strong causality implies the non-totally imprisoning property 
    as defined for Lorentzian length spaces 
    (cf.~\cite[Theorem 3.26 (iv), (iii)]{KS18}), 
    and hence global hyperbolicity.  

    Let $p, q \in M$ with $p \ll q$ and $B > 0$.  
    From Lemma \ref{compact}, it suffices to show that 
    $K_1 = \{x \in M ; p \ll q \leq x, T(p, x) \leq B\}$ 
    is compact. 
    By Fact \ref{closed} and Fact \ref{cont} (ii), 
    the set $K_1 = J^+(q) \cap \{x \in M; T(p, x) \leq B\}$ is closed. 

    Let $(z_n)_n$ be a sequence of points in $K_1$. 
    If for every $N \in \N$, there exists $n \geq N$ such that $z_n = q$, 
    then there is a subsequence that converges to $q \in K_1$. 
    Otherwise, we may assume $z_n \not= q$ for all $n \in \N$. 

    By \cite[Proposition 2.13]{G20}, there exists a maximal causal geodesic 
    $G_n \colon [0,1] \to M$ from $q$ to $z_n$.  
    Let $X_n  \coloneqq \dot{G}_n(0)$, then since 
    $\exp_q$ is defined, we have $G_n(t) = \exp_q(tX_n)$.  
    Let $L_n$ be the direction 
    containing $X_n$ (cf.~\cite[Section 2]{B76}).  
    By the reverse triangle inequality, 
    we have $T(p, q) \leq T(p, z_n) \leq B$ and 
    the set of future-pointing (causal) directions at $q$ is 
    a compact set in the space of directions, thus, 
    passing to a subsequence and relabeling it again by $n$, we have
    $L_n \to L_0\ (n \to \infty)$ where $L_0$ is the future-pointing direction at $q$ and 
    $T(p, z_n) \to b\ (n \to \infty)$ where $T(p, q) \leq b \leq B$. 

    Let $x(t) \colon [0, c) \to M$ be a future inextendible maximal causal geodesic starting at $q$ 
    with $\dot{x}(0) \in L_0$. 
    By Condition~A, $T(p,x(t)) \to \infty$ as $t \to c$, and by Lemma~\ref{lem:increasing}
    the map $t \mapsto T(p,x(t))$ is strictly increasing. 
    By Fact~\ref{cont}, $T$ is continuous and there is a unique $t_0\in(0,c)$ and $X_0 \in L_0$ 
    satisfying $T(p,x(t_0))=b$ and $\exp_q(X_0)=x(t_0)$.

    Let $\norm{\ }$ be a Euclidean inner product on $T_qM$. 
    For each direction $L$ at $q$ let $u(L)$ be the unique future-pointing causal vector in $L$ 
    with $\norm{u(L)} = 1$.
    Put $u_n \coloneqq u(L_n)$ and $u_0 \coloneqq u(L_0)$. 
    Since $L_n \to L_0\ (n \to \infty)$, we have $u_n \to u_0\ (n \to \infty)$.
    For each $n$ write
    \begin{equation}
        X_n = s_nu_n,\ s_n \coloneqq \norm{X_n} > 0,
    \end{equation}
    and similarly $X_0 = s_0u_0$ with $s_0 > 0$ characterized by $\exp_q(s_0u_0)=x(t_0)$.
    Define
    \begin{equation}
        F(u, s) \coloneqq T\left(p,\exp_q(su)\right).
    \end{equation}
    By Proposition \ref{prop:continuous} and the continuity of $T$, the map $F$ is continuous 
    in $(u, s)$ on 
    the domain of exponential map and by Lemma~\ref{lem:increasing}, for each fixed $u$ the function 
    $s \mapsto F(u,s)$ is strictly increasing.

    We have $F(u_n,s_n) = T(p,z_n) \to b\ (n \to \infty)$ and $F(u_0, s_0) = T(p, x(t_0)) = b$.
    Suppose that $s_n \not\to s_0$.
    Then there exists $\varepsilon > 0$ and a subsequence (still denoted $n$) 
    with $\abs{s_n-s_0} \geq \varepsilon$ for all $n$. 
    Choose sufficiently small $\eta > 0$ and take a compact set 
    $C \coloneqq \{u \in T_qM \colon \norm{u-u_0} \leq \eta, \norm{u}=1\}$.
    Define the continuous functions on $C$ by 
    \begin{equation}
      G_+(u) \coloneqq F(u,s_0+\varepsilon)-F(u,s_0),\qquad
      G_-(u) \coloneqq F(u,s_0)-F(u,s_0-\varepsilon).
    \end{equation}
    By strict monotonicity in $s$ and the compactness of $C$, we have 
    \begin{equation}
      \delta_\varepsilon \coloneqq \min_{u \in C}G_+(u)>0,\qquad
      \tilde\delta_\varepsilon \coloneqq \min_{u \in C}G_-(u)>0.
    \end{equation}
    Passing to a further subsequence, 
    the sequence $(s_n)_n$ satisfies either $s_n \geq s_0 + \varepsilon$ for all $n$, 
    or $s_n \leq s_0 - \varepsilon$ for all $n$.
    In the first case,
    \begin{equation}
       F(u_n,s_n) \geq F(u_n,s_0+\varepsilon) \geq F(u_n,s_0) + \delta_\varepsilon,
    \end{equation}
    while in the second,
    \begin{equation}
       F(u_n,s_n)\ \leq F(u_n,s_0-\varepsilon) \leq F(u_n,s_0) - \tilde\delta_\varepsilon.
    \end{equation}
    The continuity of $F$ and  $u_n \to u_0$ yield $F(u_n, s_0) \to F(u_0, s_0) = b\ (n \to \infty)$, 
    whereas we also have $F(u_n, s_n) \to b$ as $n \to \infty$, a contradiction.
    Hence $s_n \to s_0$, and with $u_n \to u_0$ we conclude
    $X_n = s_nu_n \to s_0u_0 = X_0$. 

    Finally, by Proposition \ref{prop:continuous}, we obtain
    $z_n=\exp_q(X_n)\to \exp_q(X_0)=x(t_0)\in K_1$. 
    Therefore every sequence in $K_1$ has a convergent subsequence with limit in $K_1$,
    so $K_1$ is compact. This proves finite compactness.
\end{proof}

The following theorem is obtained by combining 
Theorem \ref{timelikeCauchycomplete}, Theorem \ref{conditionA}, 
and Theorem \ref{th:finite-compactness}.

\begin{theorem}\label{thm:main}
Let $(M, g)$ be a connected, globally hyperbolic, causally non-branching, 
and non-intertwining $C^1$-spacetime.  
Then the following three conditions are equivalent:
\begin{enumerate}
    \item $M$ is finitely compact;
    \item $M$ is timelike Cauchy complete;
    \item $M$ satisfies Condition A.
\end{enumerate}
\end{theorem}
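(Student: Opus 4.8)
The plan is to prove the cyclic chain of implications $(1)\Rightarrow(2)\Rightarrow(3)\Rightarrow(1)$, which yields the equivalence of all three conditions. These three implications have already been isolated as Theorem~\ref{timelikeCauchycomplete}, Theorem~\ref{conditionA}, and Theorem~\ref{th:finite-compactness}, so the task reduces to checking that the hypotheses of each apply to $(M,g)$ and then assembling them.

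First I would verify that $(M,g)$, viewed via the standard construction, is a globally hyperbolic Lorentzian length space. By \cite[Proposition 5.9, Theorem 5.6]{S16} a globally hyperbolic $C^1$-spacetime is strongly causal, and by \cite[Corollary 1.17]{CG12} it is causally plain; hence by \cite[Theorem 5.12]{KS18} (cf.~Fact~\ref{fact:LSS}) the $5$-tuple $(M,\ll,\leq,d^h,T)$ built from locally Lipschitz causal curves is a Lorentzian length space. Strong causality gives the non-totally imprisoning property in the sense of Lorentzian length spaces (cf.~\cite[Theorem 3.26]{KS18}), and together with compactness of causal diamonds this makes $(M,\ll,\leq,d^h,T)$ globally hyperbolic. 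In particular, the notions of finite compactness, timelike Cauchy completeness, and Condition~A for $(M,g)$ (Definitions~\ref{def:finitely}, \ref{def:timelike}, \ref{def:conditionA}) are precisely those of the associated globally hyperbolic Lorentzian length space.

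With this in place, $(1)\Rightarrow(2)$ is immediate from Theorem~\ref{timelikeCauchycomplete} and $(2)\Rightarrow(3)$ from Theorem~\ref{conditionA}, since both hold in any globally hyperbolic Lorentzian length space. The implication $(3)\Rightarrow(1)$ is exactly Theorem~\ref{th:finite-compactness}, whose hypotheses---connectedness, global hyperbolicity, causally non-branching, and non-intertwining---coincide with those assumed here. Closing the cycle, all three conditions are equivalent.

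The genuinely substantive part of the argument is the implication $(3)\Rightarrow(1)$ carried out in Theorem~\ref{th:finite-compactness}; the other two directions persist in the much more general setting of Lorentzian length spaces. The main obstacle in $(3)\Rightarrow(1)$---and the reason the extra causal hypotheses enter---is controlling the limit of a sequence of maximal causal geodesics $G_n$ from $q$ to points $z_n\in K_1$ with $T(p,z_n)\le B$: the causally non-branching and non-intertwining conditions yield uniqueness of causal geodesics with a prescribed initial datum (Lemma~\ref{lem:uniq-from-proper}) and hence a well-defined, continuous causal exponential map (Proposition~\ref{prop:continuous}), while Lemma~\ref{lem:increasing} provides strict monotonicity of $t\mapsto T(p,x(t))$ along the limiting inextendible geodesic, which pins down the limiting parameter value via Condition~A and forces the $z_n$ to converge to a point of $K_1$.
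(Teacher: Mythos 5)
Your proposal is correct and matches the paper's proof, which likewise obtains the equivalence by chaining Theorem~\ref{timelikeCauchycomplete}, Theorem~\ref{conditionA}, and Theorem~\ref{th:finite-compactness} in the cycle $(1)\Rightarrow(2)\Rightarrow(3)\Rightarrow(1)$. Your preliminary verification that $(M,\ll,\leq,d^h,T)$ is a globally hyperbolic Lorentzian length space is the same reduction the paper carries out (inside the proof of Theorem~\ref{th:finite-compactness}), so the two arguments are essentially identical.
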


\appendix

\section{Examples of $C^1$-Spacetimes with geodesic uniqueness}

In this appendix, we introduce examples of $C^1$-spacetimes 
where the uniqueness of causal geodesics holds 
and the causal exponential map as in Definition \ref{def:exp} can be defined.

We work on $M=\R^2$ with global coordinates $(x^0,x^1)=(t,x)$.  
The metric has the form
\begin{equation}\label{eq:metric-ab-numbered}
  g= -a(x^0)(dx^0)^2 + b(x^0)(dx^1)^2,
\end{equation}
where $a,b\in C^1(\R)$ are strictly positive functions of the time coordinate $x^0$.
A dot denotes differentiation with respect to an affine parameter $s$ along a curve,
and a prime on $a,b$ denotes derivative with respect to $x^0$, 
e.g.\ $a^\prime(x^0)=\frac{da}{dx^0}(x^0)$.

\begin{proposition}\label{prop:ab-uniqueness-numbered}
Let $g$ be as in \eqref{eq:metric-ab-numbered}. 
Fix an initial point $p \coloneqq (x^0_0,x^1_0) \in \R^2$ and an initial velocity 
$v \coloneqq (\tau_0,\xi_0) \in T_{p}\R^2$. 
If $\tau_0\neq 0$, there exist $\delta>0$ and a unique geodesic
\[
\gamma(s)=(x^0(s),x^1(s))\colon (-\delta,\delta)\to\R^2
\]
satisfying the initial conditions $\gamma(0)=p$ and $\dot\gamma(0)=v$. 
In particular, every non-trivial causal geodesic satisfies $\tau_0\neq0$, 
hence the above local uniqueness applies to all causal geodesics.
\end{proposition}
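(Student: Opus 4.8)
The plan is to use the two first integrals of the geodesic flow to collapse the geodesic system into a single first-order autonomous scalar ODE whose right-hand side is locally Lipschitz precisely because $\tau_0\neq0$, and then to invoke Picard--Lindel\"of.

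First I would write the geodesic equations explicitly. For $g$ as in \eqref{eq:metric-ab-numbered} the only nonvanishing Christoffel symbols are $\Gamma^0_{00}=\frac{a'}{2a}$, $\Gamma^0_{11}=\frac{b'}{2a}$, and $\Gamma^1_{01}=\Gamma^1_{10}=\frac{b'}{2b}$ (with $a,a',b,b'$ evaluated at $x^0(s)$), so the geodesic equations are
\begin{gather*}
\ddot x^0+\frac{a'(x^0)}{2a(x^0)}(\dot x^0)^2+\frac{b'(x^0)}{2a(x^0)}(\dot x^1)^2=0,\\
\ddot x^1+\frac{b'(x^0)}{b(x^0)}\,\dot x^0\dot x^1=0.
\end{gather*}
Put $p_1\coloneqq b(x^0_0)\xi_0$ and $\varepsilon\coloneqq g_p(v,v)$; both depend only on $(p,v)$. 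The second equation is equivalent to $\frac{d}{ds}\bigl(b(x^0)\dot x^1\bigr)=0$, hence $b(x^0(s))\dot x^1(s)\equiv p_1$ along any geodesic. Moreover $s\mapsto g(\dot\gamma(s),\dot\gamma(s))$ is constant along every geodesic of a $C^1$ metric (the standard computation from metric compatibility needs only $C^1$ regularity), so $-a(x^0)(\dot x^0)^2+b(x^0)(\dot x^1)^2\equiv\varepsilon$. Eliminating $\dot x^1$ gives
\[
(\dot x^0)^2=\Phi(x^0),\qquad \Phi(\xi)\coloneqq\frac{1}{a(\xi)}\Bigl(\frac{p_1^2}{b(\xi)}-\varepsilon\Bigr),
\]
and $\Phi\in C^1(\R)$ because $a,b\in C^1$ are strictly positive.

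Next I would establish uniqueness. At $s=0$ we have $\Phi(x^0_0)=(\dot x^0(0))^2=\tau_0^2>0$, which is the only place the hypothesis $\tau_0\neq0$ is used; hence $\Phi>0$, and therefore $\sqrt{\Phi}$ is $C^1$ (in particular locally Lipschitz), on a neighborhood of $x^0_0$. For any geodesic with $\gamma(0)=p$ and $\dot\gamma(0)=v$, the function $\dot x^0$ is continuous, nonvanishing for small $|s|$ (since $(\dot x^0)^2=\Phi>0$ there), and equals $\tau_0$ at $s=0$, so it retains the sign $\sgn(\tau_0)$; thus $\dot x^0=\sgn(\tau_0)\sqrt{\Phi(x^0)}$ on a small interval about $0$. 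This is a scalar autonomous ODE with locally Lipschitz right-hand side and initial value $x^0(0)=x^0_0$, so Picard--Lindel\"of forces $x^0$ to be uniquely determined on some $(-\delta,\delta)$, and then $x^1(s)=x^1_0+\int_0^s p_1/b(x^0(u))\,du$ is determined as well. Hence any two geodesics with initial data $(p,v)$ and $\tau_0\neq0$ coincide on a common interval $(-\delta,\delta)$.

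For existence, Peano's theorem applied to the first-order reformulation of the (continuous) geodesic system yields a $C^2$ geodesic on some $(-\delta,\delta)$ with $\gamma(0)=p$ and $\dot\gamma(0)=v$; by the previous paragraph it is the only one. Finally, if $v=(\tau_0,\xi_0)$ is causal then $v\neq0$ and $g_p(v,v)=-a(x^0_0)\tau_0^2+b(x^0_0)\xi_0^2\leq0$; since $a(x^0_0),b(x^0_0)>0$, the case $\tau_0=0$ would force $\xi_0=0$, i.e.\ $v=0$, a contradiction, so $\tau_0\neq0$ for every non-trivial causal geodesic and the local uniqueness applies to all of them. I expect the only genuine obstacle to be the failure of Lipschitz regularity of the geodesic ODE for a merely $C^1$ metric, so that uniqueness cannot be read off from Picard--Lindel\"of applied directly to the second-order system; the mechanism that resolves this is that the first integrals reduce everything to $\dot x^0=\sgn(\tau_0)\sqrt{\Phi(x^0)}$, with $\sqrt{\Phi}$ of class $C^1$ near $x^0_0$ exactly because $\Phi(x^0_0)=\tau_0^2>0$. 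The only care needed is to check that $\Phi>0$ and $\sgn(\dot x^0)$ is constant for small $|s|$, both immediate from continuity.
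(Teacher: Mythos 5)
Your proposal is correct and follows essentially the same route as the paper: both exploit the conserved quantity $b(x^0)\dot x^1=\mathrm{const}$ and the constancy of $g(\dot\gamma,\dot\gamma)$ to collapse the system to a scalar first-order ODE for $x^0$, with the hypothesis $\tau_0\neq 0$ used exactly to make the right-hand side positive near $x^0_0$. The only (immaterial) difference is the closing step: the paper separates variables and inverts the strictly monotone continuous integral $s\mapsto\int\sqrt{a(u)}\big/\sqrt{\kappa^2/b(u)-\varepsilon}\,du$, whereas you note that $\sqrt{\Phi}$ is locally Lipschitz where $\Phi>0$ and invoke Picard--Lindel\"of; both arguments are valid and equivalent in substance.
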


\begin{proof}
Without loss of generality, we assume the initial data are future-directed, 
i.e. $\tau_0=\dot x^0(0)>0$.
A direct computation gives the only non-trivial Christoffel symbols:
  \begin{equation}\label{eq:Gamma-numbered}
  \Gamma^{0}_{00}=\frac{a^\prime}{2a},\qquad
  \Gamma^{0}_{11}=\frac{b^\prime}{2a},\qquad
  \Gamma^{1}_{01}=\Gamma^{1}_{10}=\frac{b^\prime}{2b},
\end{equation}
all others vanish. Thus, the geodesic equations are 
\begin{equation}\label{eq:geo-numbered}
  \ddot x^0+\frac{a^\prime}{2a}\,(\dot x^0)^2+\frac{b^\prime}{2a}\,(\dot x^1)^2=0,
  \qquad
  \ddot x^1+\frac{b^\prime}{b}\,\dot x^0\,\dot x^1=0.
\end{equation}

The $x^1$-equation in \eqref{eq:geo-numbered} gives 
\begin{equation}
\ddot x^1+\frac{b^\prime}{b}\,\dot x^0\,\dot x^1
=\frac{1}{b}\frac{d}{ds}\!\bigl(b(x^0)\,\dot x^1\bigr)=0, 
\end{equation}
and thus, we have 
\begin{equation}\label{eq:constant}
  \,b(x^0)\dot x^1=\kappa\in\R.
\end{equation}
Define the constant squared speed
\begin{equation}
\varepsilon \coloneqq g(\dot\gamma,\dot\gamma)=
-a(x^0)\,(\dot x^0)^2+b(x^0)\,(\dot x^1)^2.
\end{equation}
Combining with \eqref{eq:constant} yields
\begin{equation}\label{eq:t-first-order-numbered}
  a(x^0)\,(\dot x^0)^2=\frac{\kappa^2}{b(x^0)}-\varepsilon.
\end{equation}
Evaluating at $s=0$ gives $\kappa=b(x^0_0)\xi_0$ and 
$\varepsilon=-a(x^0_0)\tau_0^2+b(x^0_0)\xi_0^2$, and hence
\begin{equation}\label{eq:t-first-order-IC-numbered}
  a(x^0)(\dot x^0)^2
  =\kappa^{2}\left(\frac1{b(x^0)}-\frac1{b(x^0_0)}\right)+a(x^0_0)\tau_0^{2}.
\end{equation}
If $\tau_0\neq0$, the left-hand side is larger than $0$ at $x^0=x^0_0$ 
and stays positive near $x^0_0$ by continuity. 
Using the future-directed assumption $\tau_0>0$,
we fix the positive branch
\begin{equation}
\dot x^0= \sqrt{\frac{\kappa^{2}/b(x^0)-\varepsilon}{a(x^0)}} > 0.
\end{equation}
Separating variables yields the implicit integral
\begin{equation}\label{eq:inverse}
s-s_0=\int_{x^0_0}^{x^0(s)}
  \frac{\sqrt{a(u)}}{\sqrt{\kappa^{2}/b(u)-\varepsilon}}\,du.
\end{equation}
The integrand is continuous and strictly positive in a neighborhood of $x^0_0$, 
so the right-hand side is strictly increasing and continuous in $x^0$. 
By the inverse function theorem for monotone continuous functions, 
\eqref{eq:inverse} determines $x^0$ as a unique function of $s$. 
From \eqref{eq:constant},  
\begin{equation}
x^1(s)=x^1_0+\int_{s_0}^{s}\frac{\kappa}{b(x^0(u))}du, 
\end{equation}
which is uniquely determined once $x^0$ is. 
This proves local uniqueness on some $(-\delta,\delta)$.

Finally, if a non-trivial causal geodesic satisfies $\dot x^0(0)=\tau_0=0$ at the initial point,
the causality at $s=0$,
\begin{equation}
-a(x^0_0)\,(\dot x^0(0))^2 + b(x^0_0)\,(\dot x^1(0))^2 \le 0,
\end{equation}
forces $\dot x^1(0)=\xi_0=0$, a contradiction.
\end{proof}

\begin{theorem}\label{prop:CNB-non-intertwining}
Under the metric \eqref{eq:metric-ab-numbered}, 
$(M, g)$ is causally non-branching and non-intertwining. 
\end{theorem}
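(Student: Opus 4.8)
The plan is to deduce both properties from the local uniqueness established in Proposition~\ref{prop:ab-uniqueness-numbered}. The key observation is that, for the metric \eqref{eq:metric-ab-numbered}, any non-trivial causal geodesic has $\dot x^0 \neq 0$ at every parameter value: indeed, the first integral \eqref{eq:t-first-order-numbered} shows $a(x^0)(\dot x^0)^2 = \kappa^2/b(x^0) - \varepsilon$, and if this vanished at some parameter $s_1$ then causality ($\varepsilon \le 0$) together with $\kappa = b(x^0(s_1))\dot x^1(s_1)$ would force $\dot x^1(s_1) = 0$ as well, making $\dot\gamma(s_1) = 0$ and hence $\gamma$ trivial. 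Therefore I may apply Proposition~\ref{prop:ab-uniqueness-numbered} \emph{at every point along any non-trivial causal geodesic}, not just at a distinguished starting point: through each such point, with prescribed causal initial velocity, the geodesic is locally unique.

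First I would prove non-branching. Suppose a causal geodesic $\gamma\colon[a,b]\to M$ branches at $t_0\in(a,b)$, so there is $\epsilon>0$ and a geodesic $\sigma\colon(t_0-\epsilon,t_0+\epsilon)\to M$ with $\gamma|_{(t_0-\epsilon,t_0]}=\sigma|_{(t_0-\epsilon,t_0]}$ but $\gamma|_{(t_0,t_0+\epsilon)}\cap\sigma|_{(t_0,t_0+\epsilon]}=\emptyset$. Since $\gamma$ and $\sigma$ agree on an interval, they have the same velocity at $t_0$; moreover $\dot\gamma(t_0)$ is causal (being the common value of a causal velocity), so by the previous paragraph $\dot x^0(t_0)\neq 0$. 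Then Proposition~\ref{prop:ab-uniqueness-numbered}, applied at the point $\gamma(t_0)$ with initial velocity $\dot\gamma(t_0)$, gives a $\delta>0$ on which the geodesic with this initial data is unique; hence $\gamma$ and $\sigma$ coincide on $(t_0-\delta,t_0+\delta)$, contradicting the branching condition. This proves causal non-branching.

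Next I would prove non-intertwining, in the stronger form that two causal geodesics from $p$ with the same initial causal velocity $v$ actually coincide on a neighborhood of $0$ (which trivially implies the intertwining alternative). Since $v$ is causal and non-zero, $\tau_0 = \dot x^0(0)\neq 0$ by the causality computation at the end of Proposition~\ref{prop:ab-uniqueness-numbered}; hence that proposition directly yields a $\delta>0$ and a \emph{unique} geodesic with $\gamma(0)=p$, $\dot\gamma(0)=v$ on $(-\delta,\delta)$, so $\gamma_1=\gamma_2$ on $[0,\delta)$. Taking $U$ to be any neighborhood of $p$ and restricting attention to $[0,\min(\varepsilon,\delta)]$ gives the first alternative in Definition~\ref{def:non-intertwining}. (If one prefers to argue directly with the given $\varepsilon$, a standard connectedness argument upgrades local coincidence to coincidence on all of $[0,\varepsilon]$, but the weaker statement suffices.)

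The main obstacle — really the only subtlety — is justifying that local uniqueness is available \emph{at every point} of a causal geodesic, since Proposition~\ref{prop:ab-uniqueness-numbered} is phrased for a fixed starting point with the hypothesis $\tau_0\neq 0$. The resolution, carried out above, is the first-integral argument: the conserved quantities $\kappa = b(x^0)\dot x^1$ and $\varepsilon = g(\dot\gamma,\dot\gamma)$ from \eqref{eq:constant} and \eqref{eq:t-first-order-numbered} show that $\dot x^0$ cannot vanish along a non-trivial causal geodesic, so the hypothesis $\tau_0\neq 0$ of Proposition~\ref{prop:ab-uniqueness-numbered} is automatically met at any point we wish to treat as a starting point. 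Once this is in place, both non-branching and non-intertwining are immediate consequences of local uniqueness.
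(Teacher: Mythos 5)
Your proposal is correct and follows essentially the same route as the paper: both deduce causal non-branching and non-intertwining directly from the local uniqueness of Proposition~\ref{prop:ab-uniqueness-numbered}, applied at an arbitrary point of a causal geodesic. Your first-integral argument that $\dot x^0$ never vanishes along a non-trivial causal geodesic (so the hypothesis $\tau_0\neq 0$ is met at every point, not just the initial one) makes explicit a step the paper's proof leaves implicit, and is a welcome addition.
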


\begin{proof}
  Let $\gamma$ be a non-trivial causal geodesic. 
Proposition~\ref{prop:ab-uniqueness-numbered} establishes that for any point on $\gamma$, 
the solution to the geodesic equation is locally unique given the initial position and velocity. 

This local uniqueness immediately implies the non-intertwining condition. 
Indeed, for any $p \in M$ and causal vector $v \in T_pM$, any two geodesics starting 
with $(p, v)$ must coincide on some time interval $[0, \delta)$, 
which prevents them from intertwining.

Furthermore, since this local uniqueness holds at every point along any causal geodesic, 
branching cannot occur. 
Therefore, $(M, g)$ is causally non-branching.
\end{proof}

\begin{remark}
  Theorem~\ref{prop:CNB-non-intertwining} confirms that the spacetimes 
  defined by \eqref{eq:metric-ab-numbered} satisfy the assumptions required 
  in Definition~\ref{def:exp}. 
  Consequently, the causal exponential map is well-defined, 
  and its continuity is guaranteed by Proposition~\ref{prop:continuous}. 
\end{remark}

As discussed above, any causally non-branching and non-intertwining $C^1$-spacetime 
admits a well-defined continuous causal exponential map, whereas for $C^{1,1}$-spacetimes, 
this map is a local bi-Lipschitz homeomorphism \cite[Theorem 2.1]{KSS14}. 
Moreover, the $C^1$-Lorentzian splitting theorem \cite[Theorem 2]{BGMOS25} establishes that 
a certain class of $C^1$-spacetimes is $C^2$-isometric to the product of $\R$ and 
a Riemannian manifold equipped with a $C^1$-metric. In the following theorem, 
we prove that for this specific class of spacetimes, 
the causal exponential map is $C^2$ on the future causal cone.

\begin{theorem}\label{th:abstract-ab-class}
Let $M=\R^2$ with coordinates $(t,x)$ and metric
\[
g=-a(t)\,dt^2+dx^2,
\]
where $a \in C^1(\R)$ satisfies:
\begin{enumerate}
\item[(H1)] $a(t)\ge \alpha>0$ for all $t\in\R$,
\item[(H2)] $\displaystyle \int_{t_0}^{+\infty}\sqrt{a(u)}\,du=+\infty$ for some $t_0\in\R$.
\end{enumerate}
Then for every $q\in M$, the causal exponential map $\exp_q$ is $C^2$ on the cone of 
future-directed nonzero causal vectors in $T_qM$.
\end{theorem}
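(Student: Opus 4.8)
The plan is to exploit the extremely simple structure of the metric $g=-a(t)\,dt^2+dx^2$, where the $x$-equation is already in conservation form. First I would write out the geodesic equations as in the proof of Proposition~\ref{prop:ab-uniqueness-numbered}: with $\Gamma^0_{00}=a'/2a$ and no other nonvanishing symbols (since $b\equiv 1$ here), the $x$-equation reduces to $\ddot x^1=0$, so $\dot x^1=\xi_0$ is literally constant, and the causal normalization gives $a(x^0)(\dot x^0)^2=\xi_0^2-\varepsilon$ with $\varepsilon=g(v,v)\le 0$. Fix $q=(t_q,x_q)$ and a future-directed causal vector $v=(\tau_0,\xi_0)$ with $\tau_0>0$; then $\xi_0^2-\varepsilon=a(t_q)\tau_0^2>0$, and the quantity $\xi_0^2-\varepsilon$ stays equal to this positive constant along the geodesic, so by (H1) the right-hand side is bounded below by $\alpha\tau_0^2/\text{(something)}$—in any case strictly positive for all parameter values. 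Thus $\dot x^0(s)=\sqrt{(\xi_0^2-\varepsilon)/a(x^0(s))}$ never vanishes, and the geodesic is complete in the future: the implicit relation
\begin{equation}
s=\int_{t_q}^{x^0(s)}\frac{\sqrt{a(u)}}{\sqrt{\xi_0^2-\varepsilon}}\,du
\end{equation}
together with hypothesis (H2) shows that $s$ ranges over all of $[0,\infty)$ as $x^0(s)\to+\infty$, so $v\in\mathcal E_q$ for every future-directed causal $v$.

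Next I would make the inversion explicit and track regularity. Define $\Phi(t):=\int_{t_q}^t\sqrt{a(u)}\,du$; by (H1) this is a $C^2$ diffeomorphism of $\R$ onto its image (indeed $\Phi'=\sqrt a>0$ is $C^1$), so $\Phi^{-1}$ is $C^2$ as well. The relation above reads $\Phi(x^0(s))=s\sqrt{\xi_0^2-\varepsilon}$, whence
\begin{equation}
x^0(s)=\Phi^{-1}\!\bigl(s\sqrt{\xi_0^2-\varepsilon}\,\bigr),\qquad
x^1(s)=x_q+\xi_0\,s.
\end{equation}
Evaluating at $s=1$ gives the causal exponential map explicitly:
\begin{equation}
\exp_q(\tau_0,\xi_0)=\Bigl(\Phi^{-1}\!\bigl(\sqrt{\xi_0^2-\varepsilon(\tau_0,\xi_0)}\,\bigr),\ x_q+\xi_0\Bigr),
\end{equation}
where $\varepsilon(\tau_0,\xi_0)=-a(t_q)\tau_0^2+\xi_0^2$, so that $\xi_0^2-\varepsilon=a(t_q)\tau_0^2$ and in fact the first component is simply $\Phi^{-1}(\sqrt{a(t_q)}\,\tau_0)$. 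Since $\Phi^{-1}$ is $C^2$ and $\tau_0\mapsto\sqrt{a(t_q)}\,\tau_0$ is smooth (and strictly positive on the future cone where $\tau_0>0$), the map $(\tau_0,\xi_0)\mapsto\exp_q(\tau_0,\xi_0)$ is $C^2$ on the open cone of future-directed nonzero causal vectors.

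The main subtlety—and really the only place care is needed—is the regularity of $\Phi^{-1}$ and the behavior at the boundary of the cone. Away from $\tau_0=0$ everything is $C^2$ by the above; the boundary null rays $\tau_0=0$ force $\xi_0=0$ by causality, i.e. only the zero vector, which the statement excludes, so we need not worry about it. One should double-check that "causal exponential map is $C^2$" is meant in the sense of $C^2$ as a map between the relevant manifolds (here $T_q M\cong\R^2$ and $M=\R^2$), which is exactly what the explicit formula delivers. I would also remark that global hyperbolicity of $(M,g)$—needed to invoke the machinery elsewhere—follows because these metrics are conformal to a standard product and (H1) keeps the light cones uniformly bounded; but strictly for this theorem the explicit computation suffices and no appeal to Proposition~\ref{prop:continuous} is required, since we obtain not merely continuity but a closed-form $C^2$ expression. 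The anticipated obstacle, such as it is, is purely bookkeeping: confirming that the constant $\xi_0^2-\varepsilon$ is genuinely the correct conserved quantity and matching it to the initial data, which the computation in Proposition~\ref{prop:ab-uniqueness-numbered} already essentially records.
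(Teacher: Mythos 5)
Your proposal is correct and rests on exactly the same mechanism as the paper's proof: the substitution $t\mapsto\int\sqrt{a(u)}\,du$, whose inverse is $C^2$ by (H1), with (H2) guaranteeing that the future cone lies in the domain. The paper packages this as a $C^2$ isometry onto a region of $2$-dimensional Minkowski space and conjugates the straight-line exponential map, whereas you integrate the geodesic ODE directly to the same explicit formula $\exp_q(\tau_0,\xi_0)=\bigl(\Phi^{-1}(\sqrt{a(t_q)}\,\tau_0),\,x_q+\xi_0\bigr)$; the two are computationally identical.
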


\begin{proof}
Define a new time coordinate by
\[
\tau = T(t) \coloneqq \int_{t_*}^{t}\sqrt{a(u)}du ,
\]
with fixed $t_*\in\R$. Since $a\in C^1$ and $a\ge \alpha>0$ by (H1), we have
$\sqrt{a}\in C^1$ and $\sqrt{a}>0$, hence $T\in C^2$ is strictly increasing. Therefore
$T \colon \R\to I$ is a $C^2$-diffeomorphism onto an open interval $I\subset\R$, with inverse
$t=t(\tau)\in C^2$. By (H2), $T(t)\to +\infty$ as $t\to +\infty$, so $I$ is unbounded above:
there exists $A\in[-\infty,\infty)$ such that $I=(A,\infty)$.

Let $\Phi\colon \R^2 \to I\times\R$ be the $C^2$-diffeomorphism $\Phi(t,x)=(\tau,x)=(T(t),x)$. In the
$(\tau, x)$-coordinates, using $dt=d\tau/\sqrt{a(t(\tau))}$, we compute
\[
g \ =\ -a(t)\,dt^2+dx^2 \ =\ -a(t)\,\Bigl(\frac{d\tau}{\sqrt{a(t)}}\Bigr)^2+dx^2
\ =\ -\,d\tau^2+dx^2.
\]
Thus $\Phi$ is an isometry from $(\R^2,g)$ onto $(I\times\R,\eta)$, where
$\eta=-d\tau^2+dx^2$ is the 2-dimensional Minkowski metric.

Fix $q\in M$, and set $Q\coloneqq \Phi(q)\in I\times\R$. For $v\in T_qM$, write
$w\coloneqq d\Phi_q(v)\in T_Q(I\times\R)\simeq\R^2$. If $v$ is future-directed nonzero causal,
then in Minkowski coordinates $w=(w^\tau,w^x)$ satisfies $w^\tau>0$ and $(w^\tau)^2\ge (w^x)^2$.
Along the $\eta$-geodesic with initial data $(Q,w)$, the solution is the straight line
$s\mapsto Q+s\,w$. Since $I=(A,\infty)$ is unbounded above and
$w^\tau>0$, the entire segment $\{\tau(Q)+sw^\tau ; s\in[0,1]\}$ lies in $I$, hence this
geodesic exists on $[0,1]$. Therefore, the Minkowski exponential is globally defined on the
future causal cone and given by
\[
\exp_Q^{\eta}(w) = Q+w .
\]

Since $\Phi$ is an isometry, we have 
\[
\exp_q^{g}(v)\ =\ \Phi^{-1}\!\bigl(\exp_{Q}^{\eta}(d\Phi_q(v))\bigr)
\ =\ \Phi^{-1}\!\bigl(Q+d\Phi_q(v)\bigr).
\]
Here $v\mapsto d\Phi_q(v)$ is smooth, $w\mapsto Q+w$ is smooth, and
$\Phi^{-1}$ is $C^2$. Consequently, the composition $v\mapsto \exp_q^{g}(v)$ is $C^2$ on the
cone of future-directed nonzero causal vectors in $T_qM$.
\end{proof}

\bibliographystyle{abbrv} 
\bibliography{bibliography}

@article {ACS20,
    AUTHOR = {Ak\'{e} Hau, Luis and Cabrera Pacheco, Armando J. and Solis,
              Didier A.},
     TITLE = {On the causal hierarchy of {L}orentzian length spaces},
   JOURNAL = {Classical Quantum Gravity},
  FJOURNAL = {Classical and Quantum Gravity},
    VOLUME = {37},
      YEAR = {2020},
    NUMBER = {21},
     PAGES = {215013, 22},
      ISSN = {0264-9381},
   MRCLASS = {53C50 (83C75)},
  MRNUMBER = {4192594},
MRREVIEWER = {Ettore Minguzzi},
       DOI = {10.1088/1361-6382/abb25f},
       URL = {https://doi.org/10.1088/1361-6382/abb25f},
}

@phdthesis{B23, 
  author  = {Buro, Guillaume},
  title   = {Les g{\'e}od{\'e}siques des m{\'e}triques finsl{\'e}riennes et pseudo-finsl{\'e}riennes faibles de basse r{\'e}gularit{\'e}},
  school  = {{\'E}cole polytechnique f{\'e}d{\'e}rale de Lausanne},
  year    = {2023},
  type    = {Th{\`e}se de doctorat},
  address = {Lausanne}
}

@article {B67,
    AUTHOR = {Busemann, H.},
     TITLE = {Timelike spaces},
   JOURNAL = {Dissertationes Math. (Rozprawy Mat.)},
  FJOURNAL = {Polska Akademia Nauk. Instytut Matematyczny. Dissertationes
              Mathematicae. Rozprawy Matematyczne},
    VOLUME = {53},
      YEAR = {1967},
     PAGES = {52},
      ISSN = {0012-3862},
   MRCLASS = {53.95 (83.00)},
  MRNUMBER = {220238},
MRREVIEWER = {L.\ M.\ Blumenthal},
}

@article {B76,
    AUTHOR = {Beem, John K.},
     TITLE = {Globally hyperbolic space-times which are timelike {C}auchy
              complete},
   JOURNAL = {Gen. Relativity Gravitation},
  FJOURNAL = {General Relativity and Gravitation},
    VOLUME = {7},
      YEAR = {1976},
    NUMBER = {4},
     PAGES = {339--344},
      ISSN = {0001-7701},
   MRCLASS = {83.53 (53C50 83.57)},
  MRNUMBER = {469068},
MRREVIEWER = {Mauro Francaviglia},
       DOI = {10.1007/bf00771104},
       URL = {https://doi.org/10.1007/bf00771104},
}

@book {BBI01,
    AUTHOR = {Burago, Dmitri and Burago, Yuri and Ivanov, Sergei},
     TITLE = {A course in metric geometry},
    SERIES = {Graduate Studies in Mathematics},
    VOLUME = {33},
 PUBLISHER = {American Mathematical Society, Providence, RI},
      YEAR = {2001},
     PAGES = {xiv+415},
      ISBN = {0-8218-2129-6},
   MRCLASS = {53C23},
  MRNUMBER = {1835418},
MRREVIEWER = {Mario\ Bonk},
       DOI = {10.1090/gsm/033},
       URL = {https://doi.org/10.1090/gsm/033},
}

@book {BEE96,
    AUTHOR = {Beem, John K. and Ehrlich, Paul E. and Easley, Kevin L.},
     TITLE = {Global {L}orentzian geometry},
    SERIES = {Monographs and Textbooks in Pure and Applied Mathematics},
    VOLUME = {202},
   EDITION = {Second},
 PUBLISHER = {Marcel Dekker, Inc., New York},
      YEAR = {1996},
     PAGES = {xiv+635},
      ISBN = {0-8247-9324-2},
   MRCLASS = {53C50 (53-02 83-02)},
  MRNUMBER = {1384756},
MRREVIEWER = {Peter\ R.\ Law},
}

@article {BG24,
    AUTHOR = {Burtscher, Annegret and Garc\'ia-Heveling, Leonardo},
     TITLE = {Global hyperbolicity through the eyes of the null distance},
   JOURNAL = {Comm. Math. Phys.},
  FJOURNAL = {Communications in Mathematical Physics},
    VOLUME = {405},
      YEAR = {2024},
    NUMBER = {4},
     PAGES = {Paper No. 90, 35},
      ISSN = {0010-3616,1432-0916},
   MRCLASS = {53C50 (83C05)},
  MRNUMBER = {4719972},
MRREVIEWER = {Mehdi\ Sharifzadeh},
       DOI = {10.1007/s00220-024-04936-5},
       URL = {https://doi.org/10.1007/s00220-024-04936-5},
}

@misc{BGMOS25,
      title={A Lorentzian splitting theorem for continuously differentiable metrics and weights}, 
      author={Mathias Braun and Nicola Gigli and Robert J. McCann and Argam Ohanyan and Clemens Sämann},
      year={2025},
      eprint={2507.06836},
      archivePrefix={arXiv},
      primaryClass={math.DG},
      url={https://arxiv.org/abs/2507.06836}, 
}

@article {BORS23,
    AUTHOR = {Beran, Tobias and Ohanyan, Argam and Rott, Felix and Solis,
              Didier A.},
     TITLE = {The splitting theorem for globally hyperbolic {L}orentzian
              length spaces with non-negative timelike curvature},
   JOURNAL = {Lett. Math. Phys.},
  FJOURNAL = {Letters in Mathematical Physics},
    VOLUME = {113},
      YEAR = {2023},
    NUMBER = {2},
     PAGES = {Paper No. 48, 47},
      ISSN = {0377-9017,1573-0530},
   MRCLASS = {53C50 (53B30 53C23)},
  MRNUMBER = {4579262},
MRREVIEWER = {Benjam\'in\ Olea},
       DOI = {10.1007/s11005-023-01668-w},
       URL = {https://doi.org/10.1007/s11005-023-01668-w},
}

@article {BS23,
    AUTHOR = {Beran, Tobias and S\"{a}mann, Clemens},
     TITLE = {Hyperbolic angles in {L}orentzian length spaces and timelike
              curvature bounds},
   JOURNAL = {J. Lond. Math. Soc. (2)},
  FJOURNAL = {Journal of the London Mathematical Society. Second Series},
    VOLUME = {107},
      YEAR = {2023},
    NUMBER = {5},
     PAGES = {1823--1880},
      ISSN = {0024-6107,1469-7750},
   MRCLASS = {53B30 (28A75 51K10 53C23 53C50 53C80)},
  MRNUMBER = {4585303},
}

@article {CG12,
    AUTHOR = {Chru\'sciel, Piotr T. and Grant, James D. E.},
     TITLE = {On {L}orentzian causality with continuous metrics},
   JOURNAL = {Classical Quantum Gravity},
  FJOURNAL = {Classical and Quantum Gravity},
    VOLUME = {29},
      YEAR = {2012},
    NUMBER = {14},
     PAGES = {145001, 32},
      ISSN = {0264-9381,1361-6382},
   MRCLASS = {53C50 (83A05)},
  MRNUMBER = {2949547},
MRREVIEWER = {Miguel\ S\'anchez},
       DOI = {10.1088/0264-9381/29/14/145001},
       URL = {https://doi.org/10.1088/0264-9381/29/14/145001},
}

@book {dC92,
    AUTHOR = {do Carmo, Manfredo Perdig\~{a}o},
     TITLE = {Riemannian geometry},
    SERIES = {Mathematics: Theory \& Applications},
   EDITION = {Portuguese},
 PUBLISHER = {Birkh\"{a}user Boston, Inc., Boston, MA},
      YEAR = {1992},
     PAGES = {xiv+300},
      ISBN = {0-8176-3490-8},
   MRCLASS = {53-01},
  MRNUMBER = {1138207},
MRREVIEWER = {Bang-yen\ Chen},
       DOI = {10.1007/978-1-4757-2201-7},
       URL = {https://doi.org/10.1007/978-1-4757-2201-7},
}

@incollection {FS19,
    AUTHOR = {Fillastre, Fran\cois and Seppi, Andrea},
     TITLE = {Spherical, hyperbolic, and other projective geometries:
              convexity, duality, transitions},
 BOOKTITLE = {Eighteen essays in non-{E}uclidean geometry},
    SERIES = {IRMA Lect. Math. Theor. Phys.},
    VOLUME = {29},
     PAGES = {321--409},
 PUBLISHER = {Eur. Math. Soc., Z\"urich},
      YEAR = {2019},
      ISBN = {978-3-03719-196-5},
   MRCLASS = {51M09 (53A35)},
  MRNUMBER = {3965127},
}

@article {G20,
    AUTHOR = {Graf, Melanie},
     TITLE = {Singularity theorems for {$C^1$}-{L}orentzian metrics},
   JOURNAL = {Comm. Math. Phys.},
  FJOURNAL = {Communications in Mathematical Physics},
    VOLUME = {378},
      YEAR = {2020},
    NUMBER = {2},
     PAGES = {1417--1450},
      ISSN = {0010-3616,1432-0916},
   MRCLASS = {53C50 (53C20 83C75)},
  MRNUMBER = {4134950},
MRREVIEWER = {Clemens\ Saemann},
       DOI = {10.1007/s00220-020-03808-y},
       URL = {https://doi.org/10.1007/s00220-020-03808-y},
}

@article {KOSS22,
    AUTHOR = {Kunzinger, Michael and Ohanyan, Argam and Schinnerl, Benedict
              and Steinbauer, Roland},
     TITLE = {The {H}awking-{P}enrose singularity theorem for
              {$C^1$}-{L}orentzian metrics},
   JOURNAL = {Comm. Math. Phys.},
  FJOURNAL = {Communications in Mathematical Physics},
    VOLUME = {391},
      YEAR = {2022},
    NUMBER = {3},
     PAGES = {1143--1179},
      ISSN = {0010-3616,1432-0916},
   MRCLASS = {83C75 (53B30)},
  MRNUMBER = {4405569},
MRREVIEWER = {Stefan\ Suhr},
       DOI = {10.1007/s00220-022-04335-8},
       URL = {https://doi.org/10.1007/s00220-022-04335-8},
}

@article {KS18,
    AUTHOR = {Kunzinger, Michael and S\"{a}mann, Clemens},
     TITLE = {Lorentzian length spaces},
   JOURNAL = {Ann. Global Anal. Geom.},
  FJOURNAL = {Annals of Global Analysis and Geometry},
    VOLUME = {54},
      YEAR = {2018},
    NUMBER = {3},
     PAGES = {399--447},
      ISSN = {0232-704X},
   MRCLASS = {53C23 (53B30 53C50 53C80)},
  MRNUMBER = {3867652},
MRREVIEWER = {Benjam\'{\i}n Olea},
       DOI = {10.1007/s10455-018-9633-1},
       URL = {https://doi.org/10.1007/s10455-018-9633-1},
}

@article {KSS14,
    AUTHOR = {Kunzinger, Michael and Steinbauer, Roland and Stojkovi\'c,
              Milena},
     TITLE = {The exponential map of a {$C^{1,1}$}-metric},
   JOURNAL = {Differential Geom. Appl.},
  FJOURNAL = {Differential Geometry and its Applications},
    VOLUME = {34},
      YEAR = {2014},
     PAGES = {14--24},
      ISSN = {0926-2245,1872-6984},
   MRCLASS = {53B20 (53B30)},
  MRNUMBER = {3209534},
MRREVIEWER = {Young\ Ho\ Kim},
       DOI = {10.1016/j.difgeo.2014.03.005},
       URL = {https://doi.org/10.1016/j.difgeo.2014.03.005},
}

@article {L25,
    AUTHOR = {Ling, Eric},
     TITLE = {A lower semicontinuous time separation function for {$C^0$}
              spacetimes},
   JOURNAL = {Ann. Henri Poincar\'e},
  FJOURNAL = {Annales Henri Poincar\'e. A Journal of Theoretical and
              Mathematical Physics},
    VOLUME = {26},
      YEAR = {2025},
    NUMBER = {7},
     PAGES = {2293--2313},
      ISSN = {1424-0637,1424-0661},
   MRCLASS = {83C75 (53C50 83C05)},
  MRNUMBER = {4924413},
       DOI = {10.1007/s00023-024-01490-7},
       URL = {https://doi.org/10.1007/s00023-024-01490-7},
}

@book {M75,
    AUTHOR = {Munkres, James R.},
     TITLE = {Topology: a first course},
 PUBLISHER = {Prentice-Hall, Inc., Englewood Cliffs, NJ},
      YEAR = {1975},
     PAGES = {xvi+413},
   MRCLASS = {54-01},
  MRNUMBER = {464128},
}

@book {O83,
    AUTHOR = {O'Neill, Barrett},
     TITLE = {Semi-{R}iemannian geometry},
    SERIES = {Pure and Applied Mathematics},
    VOLUME = {103},
      NOTE = {With applications to relativity},
 PUBLISHER = {Academic Press, Inc. [Harcourt Brace Jovanovich, Publishers],
              New York},
      YEAR = {1983},
     PAGES = {xiii+468},
      ISBN = {0-12-526740-1},
   MRCLASS = {53-01 (53B30 53C50 83-02)},
  MRNUMBER = {719023},
MRREVIEWER = {N.\ V.\ Mitskevich},
}

@article {S16,
    AUTHOR = {S\"amann, Clemens},
     TITLE = {Global hyperbolicity for spacetimes with continuous metrics},
   JOURNAL = {Ann. Henri Poincar\'e},
  FJOURNAL = {Annales Henri Poincar\'e. A Journal of Theoretical and
              Mathematical Physics},
    VOLUME = {17},
      YEAR = {2016},
    NUMBER = {6},
     PAGES = {1429--1455},
      ISSN = {1424-0637,1424-0661},
   MRCLASS = {83C05 (53C50)},
  MRNUMBER = {3500220},
MRREVIEWER = {A.\ Burtscher},
       DOI = {10.1007/s00023-015-0425-x},
       URL = {https://doi.org/10.1007/s00023-015-0425-x},
}

@article {SS21,
    AUTHOR = {Schinnerl, Benedict and Steinbauer, Roland},
     TITLE = {A note on the {G}annon-{L}ee theorem},
   JOURNAL = {Lett. Math. Phys.},
  FJOURNAL = {Letters in Mathematical Physics},
    VOLUME = {111},
      YEAR = {2021},
    NUMBER = {6},
     PAGES = {Paper No. 142, 17},
      ISSN = {0377-9017,1573-0530},
   MRCLASS = {83C75 (46F99 53C50)},
  MRNUMBER = {4342989},
MRREVIEWER = {Clemens\ Saemann},
       DOI = {10.1007/s11005-021-01481-3},
       URL = {https://doi.org/10.1007/s11005-021-01481-3},
}

@article {SV16,
    AUTHOR = {Sormani, Christina and Vega, Carlos},
     TITLE = {Null distance on a spacetime},
   JOURNAL = {Classical Quantum Gravity},
  FJOURNAL = {Classical and Quantum Gravity},
    VOLUME = {33},
      YEAR = {2016},
    NUMBER = {8},
     PAGES = {085001, 29},
      ISSN = {0264-9381,1361-6382},
   MRCLASS = {53C50 (83F05)},
  MRNUMBER = {3476515},
MRREVIEWER = {Wolfgang\ Hasse},
       DOI = {10.1088/0264-9381/33/7/085001},
       URL = {https://doi.org/10.1088/0264-9381/33/7/085001},
}
\end{document}